\newcommand{\ca}{$C^*$-algebra}
\newcommand{\andSep}{\,\,\,\text{ and }\,\,\,}
\DeclareMathOperator{\charact}{char}
\newtheorem{lma}{Lemma}[section]
\newaliascnt{thmCt}{lma}
\newtheorem{thm}[thmCt]{Theorem}
\newaliascnt{corCt}{lma}
\newtheorem{cor}[corCt]{Corollary}
\newaliascnt{prpCt}{lma}
\newtheorem{prp}[prpCt]{Proposition}
\theoremstyle{definition}
\newaliascnt{dfnCt}{lma}
\newtheorem{dfn}[dfnCt]{Definition}
\newaliascnt{rmkCt}{lma}
\newtheorem{rmk}[rmkCt]{Remark}
\newaliascnt{rmksCt}{lma}
\newtheorem{rmks}[rmksCt]{Remarks}
\newaliascnt{exaCt}{lma}
\newtheorem{exa}[exaCt]{Example}
\newaliascnt{qstCt}{lma}
\newtheorem{qst}[qstCt]{Question}
\newaliascnt{pbmCt}{lma}
\newtheorem{pbm}[pbmCt]{Problem}
\newaliascnt{ntnCt}{lma}
\newtheorem{ntn}[ntnCt]{Notation}
\def\today{\number\day\space\ifcase\month\or   January\or February\or
   March\or April\or May\or June\or   July\or August\or September\or
   October\or November\or December\fi\   \number\year}
\newcounter{theoremintro}
\newaliascnt{thmIntroCt}{theoremintro}
\newtheorem{thmIntro}[thmIntroCt]{Theorem}
\newaliascnt{dfnIntroCt}{theoremintro}
\newtheorem{dfnIntro}[dfnIntroCt]{Definition}
\title[Fully noncentral Lie ideals and invariant additive subgroups]{Fully noncentral Lie ideals and invariant additive subgroups in rings}
\date{\today}
\author[Eusebio Gardella]{Eusebio Gardella}
\address{Eusebio Gardella
Department of Mathematical Sciences, Chalmers University of
Technology and University of Gothenburg, Gothenburg SE-412 96, Sweden.}
\email{gardella@chalmers.se}
\urladdr{www.math.chalmers.se/~gardella}
\author{Tsiu-Kwen Lee}
\address{Tsiu-Kwen Lee,
Department of Mathematics,
National Taiwan University, Taipei 106, Taiwan.}
\email{tklee@math.ntu.edu.tw}
\author{Hannes Thiel}
\address{Hannes Thiel,
Department of Mathematical Sciences, Chalmers University of
Technology and University of Gothenburg, Gothenburg SE-412 96, Sweden.}
\email{hannes.thiel@chalmers.se}
\urladdr{www.hannesthiel.org}
\thanks{
The first named author was partially supported by the Swedish Research Council Grant 2021-04561.
The third named author was partially supported by the Knut and Alice Wallenberg Foundation (KAW 2021.0140).
}
\subjclass[2020]%
{Primary
16N60, % Prime and semiprime associative rings
16W10. % Rings with involution; Lie, Jordan and other nonassociative structures 
Secondary
16S50, % Endomorphism rings; matrix rings
16W20, % Automorphisms and endomorphisms
17B60. % Lie (super)algebras associated with other structures (associative, Jordan, etc.) 
}
\keywords{Lie ideals, invariant subgroups, prime ideals, inner automorphisms, square-zero elements, commutators}
\date{\today}
\begin{document}

%==========================================================================================
\begin{abstract}
We prove conditions ensuring that a Lie ideal or an invariant additive subgroup in a ring contains all additive commutators.
A crucial assumption is that the subgroup is fully noncentral, that is, its image in every quotient is noncentral. 

For a unital algebra over a field of characteristic $\neq 2$ where every additive commutator is a sum of square-zero elements, we show that a fully noncentral subspace is a Lie ideal if and only if it is invariant under all inner automorphisms.
This applies in particular to zero-product balanced algebras.
\end{abstract}

\maketitle

%==========================================================================================
%==========================================================================================
\section{Introduction}

%==========================================================================================
Every associative ring $R$ carries a natural Lie ring structure with Lie product of two elements $x,y \in R$ defined as their commutator $[x,y] := xy - yx$.
A \emph{Lie ideal} in $R$ is then an additive subgroup $L \subseteq R$ such that $[R,L] \subseteq L$.
Obvious examples of Lie ideals in~$R$ are all additive subgroups that are either contained in the center $Z(R)$, or that contain the commutator subgroup $[R,R]$.
(Given subsets $G,H \subseteq R$ at least one of which is an additive subgroup, we follow the standard convention that $[G,H]$ and $GH$ denote the additive subgroups of $R$ generated by $\{[g,h]: g \in G, h \in H \}$ and $\{ gh : g \in G, h \in H \}$, respectively.)

In 1955, Herstein proved that this essentially already describes all Lie ideals in simple rings:
An additive subgroup $V$ of a simple ring $R$ is a Lie ideal if and only if either $V \subseteq Z(R)$ or $[R,R]\subseteq V$, unless $R$ has characteristic~2 and is 4-dimensional over its center;
see \cite[Theorem~5]{Her55LieJordanSimpleRing} and \cite[Theorem~1.5]{Her69TopicsRngThy}; 
see also Examples~\ref{exa:ExceptionalMatrixAlg} and~\ref{exa:ExceptionalDivAlg}.
This was subsequently developed into a comprehensive theory of Lie ideals in prime and semiprime rings \cite{Her70LieStructure, LanMon72LieStrPrimeChar2}. 

\medskip 

The deep results on Lie ideals in associative rings sparked the interest in results verifying that an additive subgroup in a ring is a Lie ideal.
A natural necessary condition is invariance under (certain) inner automorphisms.
Indeed, it is easy to see that every Lie ideal in a non-exceptional (\autoref{dfn:Exceptional}), simple ring is invariant under all inner automorphisms.
Further, if a subspace in an algebra over a field of characteristic~$\neq 2$ is a Lie ideal, then it is invariant under all inner automorphisms induced by square-zero elements;
see \autoref{prp:InvSqZero}.

The converse problem of when an additive subgroup that is invariant under (certain) inner automorphisms is automatically a Lie ideal was investigate by many authors, and we refer to the thorough introduction of Lanski's paper \cite{Lan89InvSubgpPrimeRg} for an overview.
We just mention the result of Amitsur \cite{Ami56InvSubmodSimpleRgs} that for a non-exceptional, simple algebra over a field of characteristic~$\neq 2$ that contains a nontrivial idempotent, a subspace is a Lie ideal if (and only if) it is invariant under inner automorphisms induced by square-zero elements.
This was extended to certain prime rings containing a nontrivial idempotent by Chuang \cite{Chu87InvAddSubgps}, and then to certain prime rings containing sufficiently many square-zero elements by Lanski \cite{Lan89InvSubgpPrimeRg}.
For non-exceptional, simple rings generated by their quasi-regular elements, and containing sufficiently many square-zero elements, Chuang \cite{Chu99InvSubgpSimpleRg} showed that an additive subgroup is a Lie ideal if (and only if) it is invariant under inner automorphisms.

Analogous questions for (closed) linear subspaces in operator algebras have also been extensively studied \cite{BreKisShu08LieIdeals, Mar10ProjCommutLieIdls, Rob16LieIdeals}.
In particular, it was shown by Marcoux and Murphy that a closed subspace in a \ca{} is a Lie ideal if and only if it is invariant under conjugation by unitaries \cite{MarMur98UniInvSpaceCAlg}.

\medskip

In this paper, we aim at conditions ensuring that an additive subgroup $V$ satisfies $[R,R] \subseteq V$ (and therefore is a Lie ideal).
We note that invariance under inner automorphisms of $R$ is a necessary condition:
Given an invertible element $u$ in the minimal unitization of $R$, and $x \in V$, we have
\[
uxu^{-1} = [u,xu^{-1}] + x \in [R,R] + V \subseteq V.
\]

We focus on rings $R$ for which $[R,R]$ is full.
Here we say that a subset~$X$ in~$R$ is \emph{full} if it generates $R$ as an ideal, that is, if it is not contained in a proper ideal of~$R$.
Rings that are generated by their commutators were studied in \cite{Bax65CommutatorSubgroupRing, Mes06CommutatorRings, Ero22SubrgGenCommutators, GarThi25GenByCommutators}.

In \cite{ChaRob23AutoContGrUnit}, Chand and Robert define a subset $X$ of a \ca{} $A$ to be `fully noncentral'  if $[A,X]$ is full in the sense that it generates $A$ as a closed ideal.
We adopt this terminology to the algebraic setting:

%==========================================================================================
\begin{dfnIntro}
\label{dfn:FullyNoncentral}
We say that a subset $X$ in a ring $R$ is \emph{fully noncentral} if $[R,X]$ is full, that is, $[R,X]$ is not contained in a proper ideal of $R$. % if $R = \widetilde{R}[R,X]\widetilde{R}$.
\end{dfnIntro}

%==========================================================================================
Our main result shows that for a large class of algebras, a fully noncentral subspace is a Lie ideal if and only if it is invariant under inner automorphisms.
The following is \autoref{prp:TFAE-FNSubspSqZero}.

%==========================================================================================
\begin{thmIntro}
\label{thmCharInv}
Let $A$ be an algebra over a field $\neq \{0,1\}$ such that every commutator in $A$ is a sum of square-zero elements, and every proper ideal is contained in a non-exceptional prime ideal.
Let $V \subseteq A$ be a fully noncentral subspace.
Then the following are equivalent:
\begin{enumerate}
\item
The subspace $V$ is invariant under all inner automorphisms of~$A$.
\item
The subspace $V$ is invariant under all inner automorphisms induced by square-zero elements of~$A$.
\item
We have $[A,A] \subseteq V$.
\item
The subspace $V$ is a Lie ideal.
\item
The subspace $V$ is an $[A,A]$-submodule.
\end{enumerate}
\end{thmIntro}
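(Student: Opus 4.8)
The plan is to organize the five statements into a single cycle built around condition~(3), which is the most rigid. The implication $(1)\Rightarrow(2)$ holds because for a square-zero element $s$ the conjugation $x\mapsto(1+s)x(1-s)$ is an inner automorphism, so (1) specializes to (2). For $(3)\Rightarrow(1)$ I would reuse the identity from the introduction: for an invertible $u$ in the unitization and $x\in V$ one has $uxu^{-1}=x+[u,xu^{-1}]$, and $[u,xu^{-1}]\in[A,A]\subseteq V$, whence $uxu^{-1}\in V$. The implication $(3)\Rightarrow(4)$ is immediate since $[A,V]\subseteq[A,A]\subseteq V$, and $(4)\Rightarrow(2)$ is precisely \autoref{prp:InvSqZero}. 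After these routine steps the whole theorem rests on the two implications $(2)\Rightarrow(5)$ and $(5)\Rightarrow(3)$, where I read (5) as the adjoint-module condition $[[A,A],V]\subseteq V$ (the only reading under which $(3)\Rightarrow(5)$ can hold).

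For $(2)\Rightarrow(5)$, the first step is a scaling argument that extracts the infinitesimal action from the automorphism invariance. Given a square-zero element $s$ and $x\in V$, applying the hypothesis to the square-zero elements $\lambda s$ gives $\lambda[s,x]-\lambda^2 sxs\in V$ for every scalar $\lambda$. Evaluating at $\lambda=1$ and at one further scalar $\lambda_0\notin\{0,1\}$ — which exists precisely because the base field is not $\{0,1\}$ — and solving the resulting $2\times2$ system, whose determinant $\lambda_0-\lambda_0^2=\lambda_0(1-\lambda_0)$ is invertible, yields $[s,x]\in V$ and $sxs\in V$ for all square-zero $s$. Now the hypothesis that every commutator is a sum of square-zero elements upgrades this: writing $c=\sum_i s_i\in[A,A]$ with each $s_i$ square-zero gives $[c,x]=\sum_i[s_i,x]\in V$, so $[[A,A],V]\subseteq V$, which is~(5).

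The implication $(5)\Rightarrow(3)$ is the heart of the proof, and I would attack it by reduction to the non-exceptional prime quotients furnished by the standing hypothesis. For a non-exceptional prime ideal $P$, the fully noncentral assumption says $[A,V]\not\subseteq P$, so the image $\bar V$ of $V$ in $\bar A:=A/P$ is noncentral, and (5) passes to $[[\bar A,\bar A],\bar V]\subseteq\bar V$. In the prime, non-exceptional ring $\bar A$ the structure theory of Herstein and Lanski then forces a noncentral $[\bar A,\bar A]$-submodule to contain $[\bar A,\bar A]$, giving $[A,A]\subseteq V+P$ for every non-exceptional prime $P$. It remains to globalize, that is, to pass from $[A,A]\subseteq\bigcap_P(V+P)$ to $[A,A]\subseteq V$. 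For this I would argue by contradiction: given a commutator $c\in[A,A]\setminus V$, a Zorn's lemma argument produces an ideal $P$ maximal with respect to $c\notin V+P$, and one shows, using that $V$ is an $[A,A]$-submodule together with the standing hypothesis, that $P$ can be taken to be a non-exceptional prime ideal, contradicting $[A,A]\subseteq V+P$.

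I expect the globalization to be the main obstacle. The reduction to prime quotients is clean and the prime case is classical, but the passage back is delicate: the inclusions $[A,A]\subseteq V+P$ do not in general intersect down to $V$, so the real content is to separate any commutator outside $V$ from $V$ by a single non-exceptional prime. Establishing that the maximal excluding ideal is prime, and that the hypothesis on $A$ makes it non-exceptional, is exactly where the assumptions must be used in full, and is where I would concentrate the effort.
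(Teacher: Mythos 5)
Your outer cycle and the implication (2)$\Rightarrow$(5) are sound and essentially match the paper: the scaling argument with $\lambda s$ is exactly \autoref{prp:InvSqZero}(1), and combining it with the hypothesis $[A,A]\subseteq N_2(A)^+$ gives (5) just as in the paper's proof, while (3)$\Rightarrow$(1) is the same conjugation identity. One small repair before the main point: you route (4)$\Rightarrow$(2) through \autoref{prp:InvSqZero}(2), but that part of the lemma requires $\charact(\mathbb{F})\neq 2$, whereas the theorem only assumes $\mathbb{F}\neq\{0,1\}$ (so fields like $\mathbb{F}_4$ are allowed). This is harmless, since you can instead use the trivial implication (4)$\Rightarrow$(5).

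The genuine gap is in (5)$\Rightarrow$(3): the ``classical prime case'' you invoke does not exist. In a non-exceptional prime ring, a noncentral $[R,R]$-submodule --- even a noncentral Lie ideal --- need not contain $[R,R]$; the theorems of Herstein and Lanski--Montgomery only produce a nonzero ideal $M$ with $[M,R]\subseteq U$, and containment of $[R,R]$ is a statement about \emph{simple} rings. Concretely, let $A=M_2(\mathbb{F}[t])$ with $\charact(\mathbb{F})\neq 2$; this ring is prime and non-exceptional, and $V=t[A,A]$ is a noncentral Lie ideal, yet $e_{12}=[e_{11},e_{12}]\in[A,A]\setminus V$, since every entry of every element of $V$ lies in $t\mathbb{F}[t]$. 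Passing to $\bar A=A/P$ you retain only noncentrality of $\bar V$, so exactly this situation can occur in your quotients. (You could retain that $\bar V$ is \emph{fully} noncentral in $\bar A$, since images of full sets are full; but then your ``prime case'' is literally the theorem being proved, restricted to prime rings, so no reduction has taken place.) Your globalization step is also unsupported: an ideal $P$ maximal with respect to $c\notin V+P$ has no reason to be prime --- the condition is additive, so the usual $m$-system argument for maximal excluding ideals does not apply --- let alone non-exceptional.

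The paper avoids both difficulties by localizing a different statement. By \autoref{prp:VmVn} (which does rest on Lanski--Montgomery), in any non-exceptional prime quotient where the image $W$ of $V$ is noncentral, the subgroups $[W^{(m)},W^{(n)}]$ remain noncentral; hence, under the standing hypothesis on $A$, the ideal generated by $[V^{(2)},V^{(3)}]$ cannot be proper, i.e.\ $[V^{(2)},V^{(3)}]$ is full. The normalizer machinery of Section~2 (\autoref{prp:LieIdealInSubgroup}, packaged as \autoref{prp:CaseRRSubmodule}) then gives the purely algebraic inclusion $\big[A,\widetilde{A}[V^{(2)},V^{(3)}]\widetilde{A}\big]\subseteq V$, so fullness yields $[A,A]\subseteq V$ at once: there is no intersection over primes and no Zorn argument anywhere. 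This is \autoref{prp:CharFNoncentralRRSubmod} (quoted in the proof as \autoref{prp:TFAE-FNSubgp}), and it is precisely the ingredient your sketch is missing.
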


%==========================================================================================
We note that for unital algebras over a field of characteristic $\neq 2$, every proper ideal is contained in a non-exceptional prime ideal.
Further, if an algebra is \emph{zero-product balanced} in the sense of \cite{GarThi23ZeroProdBalanced} (a notion closely related to the concept of a \emph{zero-product determined} algebra \cite{Bre21BookZeroProdDetermined}), then every commutator is a sum of square-zero elements (\cite[Theorem~5.3]{GarThi23ZeroProdBalanced}).
In particular, for a zero-product balanced algebra over a field of characteristic $\neq 2$, a fully noncentral subspace $V$ is a Lie ideal if and only $V$ is invariant under all inner automorphisms of $A$, if and only if $[A,A] \subseteq V$;
see \autoref{prp:Balanced}.
The class of zero-product balanced algebras contains all algebras generated by idempotents (\cite[Example~3.7]{GarThi23ZeroProdBalanced}), in particular, matrix algebras as well as simple algebras containing a nontrivial idempotent.

The proof of \autoref{thmCharInv} relies on the general result that if a subspace $V$ of an algebra $A$ over a field $\neq \{0,1\}$ is invariant invariant under all inner automorphisms, then $[x,V] \subseteq V$ for every square-zero element $x$;
see \autoref{prp:InvSqZero}.
If one additionally assumes that the field is infinite, then one can deduce that $[x,V] \subseteq V$ for every nilpotent element $x$;
see \autoref{prp:InvNilpot}.
As a consequence, we obtain a variation of \autoref{thmCharInv}, where the stronger assumption of working over an infinite field allows us to relax the condition of writing every commutator as a sum of square-zero elements to a sum of nilpotent elements;
see \autoref{prp:TFAE-FNSubspNilpot}.
Using that the subspace generated by nilpotent elements is invariant under inner automorphisms, we obtain as an application that additive commutators of nilpotent elements in an algebra over an infinite field are sums of nilpotent elements;
see \autoref{prp:CommutatorNilpotents}.

%==========================================================================================
\subsection*{Acknowledgements}

%==========================================================================================
The first and last named authors thank Laurent Marcoux and Leonel Robert for valuable comments on the Lie theory of \ca{s}.

%==========================================================================================
%==========================================================================================
\section{Results about general rings}

%==========================================================================================
In this section, we devise a method that, for a general additive subgroup~$V$ in a ring~$R$, constructs an ideal $I \subseteq R$ satisfying $[R,I] \subseteq V$;
see \autoref{prp:LieIdealInSubgroup}.
The ideal $I$ is built from higher-order (generalized) normalizers of $V$ (see \autoref{ntn:T}), and in general it may very well happen that $I=\{0\}$.
If $V$ is a Lie ideal of $[R,R]$, or more generally an $[R,R]$-submodule, then we have better control over $I$;
see \autoref{prp:CaseRRSubmodule}.

\medskip

%==========================================================================================
Given an additive subgroup $V$ in a ring $R$, the normalizer (with respect to the Lie product) is $\{ x \in R : [V,x] \subseteq V\}$.
We consider the closely related set $\{ x \in R : [R,x] \subseteq V\}$, which seems to have been considered first by Herstein in \cite{Her55LieJordanSimpleRing}, see also \cite[p.5]{Her69TopicsRngThy}, with the notation~$T(V)$.
This set also played a crucial role of the analysis of Lie ideals in \cite{BreKisShu08LieIdeals}, where it is denoted by $N(V)$.
We follow Herstein's notation, and also introduce higher-order versions:

%==========================================================================================
\begin{ntn}
\label{ntn:T}
Let $R$ be a ring, and let $V \subseteq R$ be an additive subgroup.
We set
\[
T(V) := \big\{ x \in R : [R,x] \subseteq V \big\}.
\]

We inductively define $T^n(V)$ for $n \geq 1$ by setting $T^1(V) := T(V)$ and 
\[
T^{n+1}(V) := T(T^n(V)).
\]
\end{ntn}

%==========================================================================================
Note that an additive subgroup $V \subseteq R$ is a Lie ideal if and only if $V \subseteq T(V)$.

The next result is folklore and follows for example from the proof of \cite[Lemma~3]{Her55LieJordanSimpleRing} or \cite[Lemma~1.4]{Her69TopicsRngThy}.
We include the short argument for the convenience of the reader.

%==========================================================================================
\begin{lma}
\label{prp:T-Subring}
Let $R$ be a ring, and let $V \subseteq R$ be an additive subgroup.
Then $T(V)$ is a subring. 
\end{lma}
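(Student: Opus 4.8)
The plan is to verify the two defining properties of a subring directly from the definition of $T(V)$: that it is an additive subgroup of $R$, and that it is closed under multiplication. Since the paper does not require rings to be unital, I do not need $T(V)$ to contain a multiplicative identity, so these two properties suffice.

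First I would dispatch the additive structure, which is routine and uses only that $V$ is an additive subgroup together with the additivity of the commutator $[r,-]$ in its second slot. Concretely, $0 \in T(V)$ since $[R,0] = \{0\} \subseteq V$, and if $x,y \in T(V)$ then for every $r \in R$ we have $[r, x-y] = [r,x] - [r,y] \in V$, so that $x - y \in T(V)$.

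The real content is closure under multiplication, and this is where I expect the main obstacle to lie. The tempting move via the Leibniz rule $[r,xy] = [r,x]\,y + x\,[r,y]$ does \emph{not} close: even though $[r,x],[r,y] \in V$, the products $[r,x]\,y$ and $x\,[r,y]$ need not belong to $V$, because $V$ is merely an additive subgroup and is not assumed to absorb multiplication by elements of $R$. To get around this I would instead use the purely associative identity
\[
[r,xy] = [rx, y] + [yr, x] \qquad (r \in R),
\]
verified by expanding both sides to $rxy - xyr$. Its virtue is that the right-hand side is a \emph{sum of commutators} (of the elements $rx, yr \in R$ with $y$ and with $x$) rather than a sum of products, so it interacts well with membership in $T(V)$.

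Granting this identity, the conclusion is immediate: for $x,y \in T(V)$ and arbitrary $r \in R$, the summand $[rx,y]$ lies in $[R,y] \subseteq V$ since $y \in T(V)$, and $[yr,x]$ lies in $[R,x] \subseteq V$ since $x \in T(V)$; as $V$ is an additive subgroup, their sum $[r,xy]$ lies in $V$. Since $r$ was arbitrary, this yields $[R,xy] \subseteq V$, that is, $xy \in T(V)$, and hence $T(V)$ is a subring.
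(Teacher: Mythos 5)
Your proof is correct and uses exactly the same key identity as the paper, namely $[r,xy]=[rx,y]+[yr,x]$, which rewrites the commutator with a product as a sum of two commutators and thus lands in $V$ directly. The additive part is handled the same routine way, so your argument matches the paper's proof in all essentials.
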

\begin{proof}
Using the bi-additivitiy of the Lie product, we see that $T(V)$ is an additive subgroup.
To show that $T(V)$ is closed under multiplication, let $x,y \in T(V)$.
Using that $[a,xy] = [ax,y] + [ya,x]$ for all $a \in R$, we have
\[
[R,xy] \subseteq [R,y] + [R,x] \subseteq V+V = V,
\]
and thus $xy \in T(V)$.
\end{proof}

%==========================================================================================
Given a ring $R$, we use $\widetilde{R}$ to denote its minimal unitization, given by $\widetilde{R}=R$ if $R$ is unital, and by $\widetilde{R} = \mathbb{Z} \times R$ with coordinatewise addition and multiplication $(m,x)(n,y)=(mn,my+nx+xy)$ if $R$ is non-unital.
The map $R \to \widetilde{R}$ given by $x \mapsto (x,0)$, identifies $R$ with an ideal in $\widetilde{R}$.

The ideal of $R$ generated by an additive subgroup $V \subseteq R$ is $V+RV+VR+RVR$, which agrees with $\widetilde{R}V\widetilde{R}$.
Note that $RVR$ is also an ideal of $R$, but if $R$ is non-unital then it may not contain $V$ and therefore can be strictly smaller than $\widetilde{R}V\widetilde{R}$.

%==========================================================================================
\begin{thm}
\label{prp:LieIdealInSubgroup}
Let $R$ be a ring, and let $V \subseteq R$ be an additive subgroup.
Then 
\[
\widetilde{R}\big[ T(V) \cap V, T^2(V) \cap T(V) \big]\widetilde{R} 
\ \subseteq \ V + V^2
\]
and
\[
\big[ R, \widetilde{R}[ T^2(V) \cap T(V), T^3(V) \cap T^2(V) ]\widetilde{R} \big] 
\ \subseteq \ V.
\]

In particular, if $[T^2(V) \cap T(V), T^3(V) \cap T^2(V)]$ is full, then $[R,R] \subseteq V$, and so~$V$ is a Lie ideal in $R$.
\end{thm}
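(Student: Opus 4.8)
The plan is to derive the second displayed inclusion and the final assertion formally from the first, so that the real work is concentrated in the first inclusion. For the second inclusion, observe that $T(V)$ is a subring by \autoref{prp:T-Subring}, so applying the first inclusion with $V$ replaced by the additive subgroup $V' := T(V)$ --- noting that $T(V')=T^2(V)$ and $T^2(V')=T^3(V)$, hence $T(V')\cap V'=T^2(V)\cap T(V)$ and $T^2(V')\cap T(V')=T^3(V)\cap T^2(V)$ --- yields
\[
\widetilde{R}\big[T^2(V)\cap T(V),\, T^3(V)\cap T^2(V)\big]\widetilde{R} \ \subseteq\ T(V) + T(V)^2 = T(V),
\]
where the last equality uses that $T(V)$ is closed under multiplication. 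Since $[R,T(V)]\subseteq V$ by the very definition of $T(V)$, applying $[R,-]$ gives the second inclusion. For the final assertion, recall that the ideal generated by the additive subgroup $D := [T^2(V)\cap T(V),\, T^3(V)\cap T^2(V)]$ is exactly $\widetilde{R}D\widetilde{R}$; thus $D$ being full means $\widetilde{R}D\widetilde{R}=R$, and the second inclusion then reads $[R,R]\subseteq V$, whence $[R,V]\subseteq[R,R]\subseteq V$ and $V$ is a Lie ideal.

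It remains to prove the first inclusion, and here the key is a one-sided estimate: for any $a' \in V$, any $b' \in T(V)$, and any $\rho \in \widetilde{R}$, I claim $\rho[a',b'] \in V + V^2$. This follows from the Leibniz-type identity $\rho[a',b'] = [\rho a', b'] - [\rho, b']a'$, since $[\rho a', b'] \in [R,b'] \subseteq V$ and $[\rho, b'] \in [R, b'] \subseteq V$ because $b' \in T(V)$, while $a' \in V$ forces $[\rho, b']a' \in V^2$. (An entirely symmetric computation with $[a',b']\sigma = [a'\sigma, b'] - a'[\sigma, b']$ handles right multiplication, but I will not need it separately.)

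Now fix $a \in T(V)\cap V$ and $b \in T^2(V)\cap T(V)$, write $c := [a,b]$, and take $\rho, \sigma \in \widetilde{R}$; by bi-additivity it suffices to show $\rho c\sigma \in V+V^2$. The idea is to move $\sigma$ to the left of $c$ at the cost of a commutator: $\rho c\sigma = \rho\sigma c - \rho[\sigma, c]$. The term $\rho\sigma c = (\rho\sigma)[a,b]$ lies in $V+V^2$ by the one-sided estimate. For the second term I would expand via the Jacobi identity
\[
[\sigma, c] = [\sigma,[a,b]] = [[\sigma,a],b] + [a,[\sigma,b]].
\]
Here $[\sigma, a] \in V$ because $a \in T(V)$, so $[[\sigma,a],b]$ has the form $[x,b]$ with $x\in V$ and $b\in T(V)$; and $[\sigma, b] \in T(V)$ because $b \in T^2(V)$, so $[a,[\sigma,b]]$ has the form $[a,y]$ with $a\in V$ and $y\in T(V)$. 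Both are therefore in the scope of the one-sided estimate, giving $\rho[\sigma,c]\in V+V^2$, and hence $\rho c\sigma \in V+V^2$.

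The main obstacle is exactly this passage from a one-sided to a two-sided multiplicative estimate: a naive attempt to bound $\rho c \sigma$ by first bounding $\rho c$ and then right-multiplying fails, because $V+V^2$ is not stable under right multiplication by $\widetilde{R}$. Commuting $\sigma$ across $c$ trades the unwanted right multiplication for the commutator $[\sigma, c]$, and it is precisely here that the two extra layers of the hypotheses are consumed: $a \in T(V)$ turns $[\sigma, a]$ into a $V$-element, while $b \in T^2(V)$ turns $[\sigma, b]$ into a $T(V)$-element, so that after the Jacobi expansion every surviving commutator again matches the pattern of a $V$-element against a $T(V)$-element, to which the one-sided estimate applies. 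Keeping careful track of $\widetilde{R}$ versus $R$ (so that elements such as $\rho a'$ remain in $R$, using that $R$ is an ideal of $\widetilde{R}$) is the only remaining bookkeeping.
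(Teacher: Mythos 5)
Your proposal is correct, and its overall architecture matches the paper's: the second inclusion is obtained exactly as in the paper (apply the first inclusion to $T(V)$ in place of $V$, use that $T(V)$ is a subring by \autoref{prp:T-Subring}, then hit the result with $[R,-]$), and the fullness consequence is derived the same way. Where you differ is in the proof of the first, workhorse inclusion. The paper verifies in one shot the explicit identity
\[
a[x,y]b
= \big[ ax, [y,b] \big]
+ \big[ [y,b], a \big]x
+ \big[ a[x,b], y \big]
+ [y,a][x,b]
+ [abx,y]
+ [y,ab]x,
\]
and then reads off that each of the six terms lies in $V$ or $V^2$. You instead modularize: first a one-sided estimate $\rho[a',b'] = [\rho a',b'] - [\rho,b']a' \in V+V^2$ for $a'\in V$, $b'\in T(V)$, and then the reduction $\rho c\sigma = \rho\sigma c - \rho[\sigma,c]$ together with the Jacobi expansion $[\sigma,[a,b]] = [[\sigma,a],b]+[a,[\sigma,b]]$, whose two terms again match the one-sided pattern. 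Fully expanded, your argument produces a six-term identity of the same complexity as the paper's, so neither route is shorter; what yours buys is transparency about where the hypotheses are consumed --- each commutation against $\widetilde{R}$ drops one level in the hierarchy $T^2(V)\to T(V)\to V$ --- whereas the paper's identity is more opaque but requires no intermediate lemma. Your bookkeeping (that $\rho a'\in R$ because $R$ is an ideal of $\widetilde{R}$, and that $[\widetilde{R},X]=[R,X]$ for $X\subseteq R$) is exactly the bookkeeping the paper also invokes, and it is handled correctly.
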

\begin{proof}
Let $a,b \in \widetilde{R}$, let $x \in T(V) \cap V$, and let $y \in T^2(V) \cap T(V)$.
Note that $[\widetilde{R},X] = [R,X]$ for every subset $X \subseteq R$.
Using a direct computation in the first step, we get
\begin{align*}
a[x,y]b 
&= \big[ ax, [y,b] \big]
+ \big[ [y,b], a \big]x
+ \big[ a[x,b], y \big] \\
&\quad + [y,a][x,b]
+ [abx,y]
+ [y,ab]x \\
&\in \big[ R, [T^2(V),\widetilde{R}] \big]
+ \big[ [T^2(V),\widetilde{R}],\widetilde{R} \big]V
+ \big[ R, T(V) \big] \\
&\quad +[T(V),\widetilde{R}][T(V),\widetilde{R}]
+ [R,T(V)]
+ [T(V),\widetilde{R}]V \\
&\subseteq V + V^2.
\end{align*}
This verifies the first claimed inclusion.

Applying this inclusion for $T(V)$ in place of $V$, and using at the last step that~$T(V)$ is a subring by \autoref{prp:T-Subring}, we get
\[
\widetilde{R}\big[ T^2(V) \cap T(V), T^3(V) \cap T^2(V) \big]\widetilde{R}
\ \subseteq \ T(V) + T(V)^2
\ \subseteq \ T(V).
\]

It follows that
\[
\big[ R, \widetilde{R}[ T^3(V) \cap T^2(V), T^2(V) \cap T(V) ]\widetilde{R} \big]
\ \subseteq \ [ R, T(V) ]
\ \subseteq \ V,
\]
as desired.
\end{proof}

%==========================================================================================
If we apply \autoref{prp:LieIdealInSubgroup} to a Lie ideal, then we obtain the following well-known result;
see, for example, \cite[Lemma~1.1]{Rob16LieIdeals}, \cite[Lemma~2.1(ii)]{Lee22AddSubgpGenNCPoly}.

%==========================================================================================
\begin{cor}
\label{prp:CaseLieIdeal}
Let $R$ be a ring, and let $L \subseteq R$ be a Lie ideal.
Then $T^{n}(L)\subseteq T^{n+1}(L)$ for all $n\geq 0$, 
and we deduce that
\[
\widetilde{R}[L,L]\widetilde{R} \subseteq L + L^2, \andSep
\big[ R, \widetilde{R}[L,L]\widetilde{R} \big] \subseteq L.
\]
\end{cor}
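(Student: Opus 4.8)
The plan is to derive \autoref{prp:CaseLieIdeal} as a direct specialization of \autoref{prp:LieIdealInSubgroup} by exploiting the defining property of a Lie ideal, namely that $L \subseteq T(L)$. The first task is to establish the ascending chain $T^n(L) \subseteq T^{n+1}(L)$ for all $n \geq 0$. I would prove this by induction on $n$. For the base case $n=0$, interpreting $T^0(L) = L$, the inclusion $L \subseteq T(L)$ is exactly the statement that $L$ is a Lie ideal, which holds since $[R,L] \subseteq L$. For the inductive step, I would observe that the operator $T$ is monotone with respect to inclusion: if $W \subseteq W'$, then $[R,x] \subseteq W \subseteq W'$ whenever $x \in T(W)$, so $T(W) \subseteq T(W')$. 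Applying this monotonicity to the inclusion $T^n(L) \subseteq T^{n+1}(L)$ immediately yields $T^{n+1}(L) = T(T^n(L)) \subseteq T(T^{n+1}(L)) = T^{n+2}(L)$, closing the induction.

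With the chain in hand, the second task is to feed it into \autoref{prp:LieIdealInSubgroup}. That theorem involves the sets $T(L) \cap L$, $T^2(L) \cap T(L)$, $T^3(L) \cap T^2(L)$. The monotonicity just established tells us that each intersection collapses: since $L \subseteq T(L) \subseteq T^2(L) \subseteq T^3(L)$, we have $T(L) \cap L = L$, $T^2(L) \cap T(L) = T(L)$, and $T^3(L) \cap T^2(L) = T^2(L)$. Thus the first displayed inclusion of \autoref{prp:LieIdealInSubgroup} specializes to $\widetilde{R}[L, T(L)]\widetilde{R} \subseteq L + L^2$. To obtain the stated conclusion $\widetilde{R}[L,L]\widetilde{R} \subseteq L + L^2$, I would use once more that $L \subseteq T(L)$, so $[L,L] \subseteq [L,T(L)]$, whence $\widetilde{R}[L,L]\widetilde{R} \subseteq \widetilde{R}[L,T(L)]\widetilde{R} \subseteq L + L^2$.

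For the final inclusion $[R, \widetilde{R}[L,L]\widetilde{R}] \subseteq L$, I would similarly specialize the second displayed inclusion of \autoref{prp:LieIdealInSubgroup}, which reads $[R, \widetilde{R}[T^2(L) \cap T(L), T^3(L) \cap T^2(L)]\widetilde{R}] \subseteq L$. By the collapse of intersections this becomes $[R, \widetilde{R}[T(L), T^2(L)]\widetilde{R}] \subseteq L$, and since $L \subseteq T(L)$ and $L \subseteq T^2(L)$ give $[L,L] \subseteq [T(L), T^2(L)]$, monotonicity of the ideal-generation and commutator operations yields $[R, \widetilde{R}[L,L]\widetilde{R}] \subseteq [R, \widetilde{R}[T(L), T^2(L)]\widetilde{R}] \subseteq L$, as desired.

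The argument is essentially bookkeeping, so I do not anticipate a genuine obstacle; the only point requiring care is making sure the monotonicity of $T$ and the resulting collapse of the intersections are invoked consistently, and that the reindexing of the commutator brackets in \autoref{prp:LieIdealInSubgroup} (which lists the factors in one order in the first inclusion and the reversed order in the second) is handled correctly via the antisymmetry $[W,W'] = [W',W]$ of the commutator on additive subgroups. Once those observations are recorded, both displayed inclusions follow immediately.
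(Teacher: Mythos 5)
Your proposal is correct and follows essentially the same route as the paper's proof: establish monotonicity of $T$, deduce the chain $T^n(L) \subseteq T^{n+1}(L)$ by induction from $L \subseteq T(L)$, and then specialize \autoref{prp:LieIdealInSubgroup}. You merely spell out the collapse of the intersections and the monotonicity of the bracket more explicitly than the paper does (and your remark about antisymmetry of $[\cdot,\cdot]$ is not even needed, since the theorem's stated inclusions already have the factors in the order you use).
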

\begin{proof}
Given additive subgroups $V_1 \subseteq V_2 \subseteq R$, we have $T(V_1) \subseteq T(V_2)$.
Since $L$ is a Lie ideal, we have $L \subseteq T(L)$ by definition.
Applying the above observation inductively, 
we get the desired inclusion.
The other inclusions now follow from \autoref{prp:LieIdealInSubgroup}.
\end{proof}

%==========================================================================================
One says that an additive subgroup $V$ of a ring $R$ is an \emph{$[R,R]$-submodule} if $[[R,R],V] \subseteq V$.
This includes Lie ideals in $[R,R]$, but it also allows for subgroups~$V$ that are not contained in $[R,R]$.
Given an additive subgroup $V \subseteq R$, we set $V^{(0)} := V$, $V^{(1)} := [V,V]$, and $V^{(n+1)} := [V^{(n)},V^{(n)}]$ for $n \geq 1$.

In the next result, we show that for an $[R,R]$-submodule $V$, the groups $V^{(n)}$, for $n\geq 1$, form a decreasing sequence.
We do not claim that $V^{(1)} \subseteq V=V^{(0)}$.

%==========================================================================================
\begin{lma}
\label{prp:T-RRSubmodule}
Let $V \subseteq R$ be an $[R,R]$-submodule and let $n\geq 1$.
Then $V^{(n)}$ is an $[R,R]$-submodule satisfying $V^{(n)} \subseteq T^n(V)$.
Further, we have $V^{(1)} \supseteq V^{(2)} \supseteq \cdots$.
\end{lma}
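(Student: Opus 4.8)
The plan is to run everything off the Jacobi identity, together with the monotonicity of the operator $T$ (if $W_1 \subseteq W_2$ then $T(W_1) \subseteq T(W_2)$, as already observed in the proof of \autoref{prp:CaseLieIdeal}) and the defining property $[[R,R],V] \subseteq V$ of an $[R,R]$-submodule. First I would isolate a single auxiliary closure statement and then feed it into three short inductions, one for each assertion. The one genuinely conceptual point, discussed at the end, is why the descending chain starts only at level~$1$.

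The auxiliary statement is: \emph{for every $[R,R]$-submodule $W \subseteq R$, the group $[W,W]$ is again an $[R,R]$-submodule, and moreover $[W,W] \subseteq T(W)$.} Both halves come from expanding a bracket $[c,[a,b]]$ with $a,b \in W$ via the Jacobi identity as $[[c,a],b] + [a,[c,b]]$. If $c \in [R,R]$, then $[c,a],[c,b] \in [[R,R],W] \subseteq W$, so both summands lie in $[W,W]$, giving the submodule property. If instead $c = r \in R$ is arbitrary, then $[r,a],[r,b] \in [R,R]$, hence $[[r,a],b]$ and $[a,[r,b]] = -[[r,b],a]$ lie in $[[R,R],W] \subseteq W$; thus $[r,[a,b]] \in W$, which says exactly that $[a,b] \in T(W)$. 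In both cases additivity in the outer slot lets me pass from these generators to the full groups (using that $T(W)$ is an additive group by \autoref{prp:T-Subring}).

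With this in hand the first two claims are immediate inductions. Since $V = V^{(0)}$ is an $[R,R]$-submodule by hypothesis and $V^{(n+1)} = [V^{(n)},V^{(n)}]$, the first half of the auxiliary statement shows by induction that every $V^{(n)}$ is an $[R,R]$-submodule. For $V^{(n)} \subseteq T^n(V)$ the base case $V^{(1)} = [V,V] \subseteq T(V)$ is the second half of the auxiliary statement applied to $W = V$; for the inductive step, the second half applied to $W = V^{(n)}$ gives $V^{(n+1)} \subseteq T(V^{(n)})$, and then monotonicity of $T$ together with the inductive hypothesis $V^{(n)} \subseteq T^n(V)$ yields $T(V^{(n)}) \subseteq T(T^n(V)) = T^{n+1}(V)$, as needed.

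The descending chain is where the restriction $n \geq 1$ does the work, and this is the step I would be most careful about. For every $n \geq 1$ the group $V^{(n)} = [V^{(n-1)},V^{(n-1)}]$ consists of commutators, so $V^{(n)} \subseteq [R,R]$. Using one copy of this containment, $V^{(n+1)} = [V^{(n)},V^{(n)}] \subseteq [[R,R],V^{(n)}]$, and since $V^{(n)}$ is an $[R,R]$-submodule the right-hand side is contained in $V^{(n)}$; hence $V^{(n+1)} \subseteq V^{(n)}$. This is exactly the step that fails at level~$0$: to conclude $V^{(1)} = [V,V] \subseteq V$ by the same reasoning one would need $V \subseteq [R,R]$, which is not assumed, and that is why the chain is only asserted from $V^{(1)}$ onward (matching the remark preceding the statement). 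None of the individual computations is hard; the only things to get right are to keep the two roles of the outer bracket element cleanly separated (ranging over $[R,R]$ for the submodule property, over all of $R$ for membership in $T$), and to notice that it is precisely the commutator form of $V^{(n)}$ for $n \geq 1$ that unlocks the submodule property in the last argument.
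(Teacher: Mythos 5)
Your proof is correct and takes essentially the same route as the paper: your auxiliary statement is exactly the paper's two claims (for any $[R,R]$-submodule $W$, the group $[W,W]$ is again an $[R,R]$-submodule and $[W,W] \subseteq T(W)$), proved by the same Jacobi-identity expansion $[c,[a,b]] = [[c,a],b] + [a,[c,b]]$, and your inductions and descending-chain argument (using $V^{(n)} \subseteq [R,R]$ for $n \geq 1$, which is precisely why the chain starts at level $1$) coincide with the paper's. The only cosmetic difference is that you obtain the submodule property of $[W,W]$ directly from $[[R,R],W] \subseteq W$, whereas the paper routes it through its first claim.
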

\begin{proof}
Claim~1: \emph{Given an $[R,R]$-module $W \subseteq R$, we have $[W,W] \subseteq T(W)$.}
Indeed, we have $[[R,R],W] \subseteq W$ by assumption.
Applying the Jacobi identity at the first step, we get
\[
\big[ R, [W,W] \big] 
\subseteq \big[ W, [R,W] \big] + \big[W, [W,R] \big]
\subseteq \big[ W, [R,R] \big] 
\subseteq W.
\]
and thus $[W,W] \subseteq T(W)$, which proves the claim.

Claim~2: \emph{Given an $[R,R]$-module $W\subseteq R$, then $[W,W]$ is an $[R,R]$-submodule as well.}
Applying the Jacobi identity at the first step, and using Claim~1 at the second step, we get
\[
\big[ [R,R], [W,W] \big] 
\subseteq \big[ [R,[W,W]], W \big] + \big[ [[W,W],R], W \big],
\subseteq [W,W].
\]
which verifies the claim.

Now, applying Claim~2 successively, we obtain that $V^{(n)}$ is an $[R,R]$-submodule for all $n \geq 1$.
Further, we deduce that
\[
V^{(n+2)}
= \big[ V^{(n+1)}, V^{(n+1)} \big] 
= \big[ [V^{(n)},V^{(n)}], V^{(n+1)} \big] 
\subseteq \big[ [R,R], V^{(n+1)} \big] 
\subseteq V^{(n+1)}
\]
for all $n \geq 0$.
Thus, we have $V^{(1)} \supseteq V^{(2)} \supseteq \ldots$.

Next, we verify by induction that $V^{(n)} \subseteq T^n(V)$ for all $n \geq 1$.
We have $V^{(1)} = [V,V] \subseteq T(V) = T^1(V)$ by Claim~1, which verifies the case $n=1$.
Assume that $V^{(n)} \subseteq T^n(V)$ for some $n \geq 1$.
Applying Claim~1 at the second step, we get
\[
\big[ R, V^{(n+1)} \big]
= \big[ R, [V^{(n)},V^{(n)}] \big] 
\subseteq V^{(n)}
\subseteq T^n(V)
\]
and thus $V^{(n+1)} \subseteq T(T^n(V)) = T^{(n+1)}(V)$.
\end{proof}

%==========================================================================================
We obtain another corollary of \autoref{prp:LieIdealInSubgroup}.

%==========================================================================================
\begin{cor}
\label{prp:CaseRRSubmodule}
Let $R$ be a ring, and let $V \subseteq R$ be an $[R,R]$-submodule.
Then 
\[
V^{(n)}\subseteq \bigcap_{j=1}^n T^j(V)
\]
for all $n\geq 1$, and we deduce that
\[
\widetilde{R}\big[ V^{(1)} \cap V, V^{(2)} \big]\widetilde{R} \subseteq V + V^2, \andSep
\big[ R, \widetilde{R}[ V^{(2)}, V^{(3)} ]\widetilde{R} \big] \subseteq V.
\]

In particular, if $[ V^{(2)}, V^{(3)} ]$ is full, then $[R,R] \subseteq V$, and so~$V$ is a Lie ideal in~$R$.
\end{cor}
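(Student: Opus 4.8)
The plan is to derive both displayed inclusions directly from \autoref{prp:T-RRSubmodule} and \autoref{prp:LieIdealInSubgroup}, so that no fresh commutator manipulation is needed. I would begin with the chain $V^{(n)} \subseteq \bigcap_{j=1}^n T^j(V)$. From \autoref{prp:T-RRSubmodule} we already know that $V^{(n)} \subseteq T^n(V)$ for every $n \geq 1$ and that the derived subgroups satisfy $V^{(1)} \supseteq V^{(2)} \supseteq \cdots$. Consequently, for $1 \leq j \leq n$ we have $V^{(n)} \subseteq V^{(j)} \subseteq T^j(V)$, and intersecting over $j$ gives the first assertion. The one point to keep track of is that the decreasing chain only starts at $V^{(1)}$, which is precisely why the intersection ranges over $j \geq 1$.

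Next I would substitute the appropriate instances into \autoref{prp:LieIdealInSubgroup}, using that both the Lie bracket and the two-sided products $\widetilde{R}(-)\widetilde{R}$ are monotone in their arguments. For the first inclusion, observe that $V^{(1)} = [V,V] \subseteq T(V)$, hence $V^{(1)} \cap V \subseteq T(V) \cap V$, while the case $n=2$ of the chain gives $V^{(2)} \subseteq T(V) \cap T^2(V)$. Feeding these into the first inclusion of \autoref{prp:LieIdealInSubgroup} yields $\widetilde{R}[V^{(1)} \cap V, V^{(2)}]\widetilde{R} \subseteq \widetilde{R}[T(V) \cap V, T^2(V) \cap T(V)]\widetilde{R} \subseteq V + V^2$. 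For the second inclusion, I would additionally invoke the case $n=3$ of the chain to get $V^{(3)} \subseteq T(V) \cap T^2(V) \cap T^3(V)$; together with $V^{(2)} \subseteq T^2(V) \cap T(V)$, monotonicity upgrades the second inclusion of \autoref{prp:LieIdealInSubgroup} to $[R, \widetilde{R}[V^{(2)}, V^{(3)}]\widetilde{R}] \subseteq V$.

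Finally, for the closing assertion I would recall that $[V^{(2)}, V^{(3)}]$ being full means exactly that its generated ideal $\widetilde{R}[V^{(2)}, V^{(3)}]\widetilde{R}$ equals $R$. Substituting this equality into the inclusion just obtained gives $[R,R] = [R, \widetilde{R}[V^{(2)}, V^{(3)}]\widetilde{R}] \subseteq V$, and since $V \subseteq R$ forces $[R,V] \subseteq [R,R] \subseteq V$, the subgroup $V$ is a Lie ideal. I do not anticipate a genuine obstacle: the substance lies in \autoref{prp:T-RRSubmodule} and \autoref{prp:LieIdealInSubgroup}, and the corollary is simply a matter of matching the higher normalizers $T^j(V)$ against the derived subgroups $V^{(j)}$. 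The only thing requiring care is the bookkeeping of indices and the direction of the inclusions --- the $V^{(j)}$ sit inside the $T^j(V)$, and a superset of a full set is again full --- so that each monotone substitution runs the correct way.
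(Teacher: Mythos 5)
Your proposal is correct and follows exactly the paper's own route: the first inclusion comes from Lemma~\ref{prp:T-RRSubmodule} (monotone chain $V^{(n)}\subseteq V^{(j)}\subseteq T^j(V)$), and the displayed inclusions plus the fullness conclusion come from substituting these containments into Theorem~\ref{prp:LieIdealInSubgroup}. The paper's proof is just a terse citation of those two results; you have merely spelled out the monotonicity bookkeeping that the authors leave implicit.
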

\begin{proof}
The first inclusion follow from \autoref{prp:T-RRSubmodule}.
The other inclusions then follow from \autoref{prp:LieIdealInSubgroup}.
\end{proof}

%==========================================================================================
\begin{rmk}
\label{rmk:V2V3-full}
Let $V \subseteq R$ be an $[R,R]$-submodule.
In light of \autoref{prp:CaseRRSubmodule}, it is interesting to determine when $[ V^{(2)}, V^{(3)} ]$ is full.
By \autoref{prp:T-RRSubmodule}. we have
\[
[ V^{(2)}, V^{(3)} ]
\ \subseteq \ [ V^{(2)}, V^{(2)} ]
\ = \ V^{(3)}
\ \subseteq \ V^{(1)}
\ = \ [V,V]
\ \subseteq \ [R,V].
\]

Thus, a necessary condition is that $[R,V]$ is full, that is, $V$ is fully noncentral (\autoref{dfn:FullyNoncentral}).
In the next section, we study rings for which full noncentrality of~$V$ is also sufficient.
\end{rmk}

%==========================================================================================
%==========================================================================================
\section{Rings where proper ideals are contained in prime ideals}
\label{sec:CofinalPrimeIdeals}

%==========================================================================================
In this section, we study rings where every proper ideal is contained in a prime ideal.
For a Lie ideal $L$ in such a ring $R$ for which $[L,L]$ is full, we show that $L$ contains the commutator subgroup $[R,R]$ and that $R=L^2$;
see \autoref{prp:LLFull}.

\medskip

In \cite{Lee22AddSubgpGenNCPoly,Lee22HigherCommutators}, the second named author initiated the study of rings where every proper ideal is contained in a maximal ideal.
This class includes all rings that are unital or just finitely generated as an ideal, as well as all rings satisfying the ascending chain condition for ideals.
In \autoref{prp:MaxVsPrime} below we clarify the relationship with the class of rings where every ideal is contained in a prime ideal.
Note that there exist rings with maximal ideals that are not prime.
Further, there exist rings where every proper ideal is contained in a prime ideal, but not every proper ideal is contained in a maximal ideal, for example the commutative \ca{} $C_0(\mathbb{R})$ of continuous functions $\mathbb{R}\to\mathbb{C}$ which vanish at infinity. 
We refer the reader to \cite{BeiMarMik96RgsGenIds} and \cite[Chapter~4]{Lam01FirstCourse2ed} for the basic theory of prime ideals and prime rings.
We also note that the class of rings where every proper ideal is contained in a prime ideal (even a non-exceptional prime ideal) includes all unital rings, as well as every \ca{} \cite{GarThi24PrimeIdealsCAlg}.

A ring $R$ is said to be \emph{idempotent} if $R = R^2$, that is, if every element of~$R$ is the sum of products of elements of $R$.
This holds for 
every unital ring, as well as for Banach algebras with bounded approximate identity by the Cohen factorization theorem.

%==========================================================================================
\begin{prp}
\label{prp:MaxVsPrime}
Let $R$ be a ring.
Then the following hold:
\begin{enumerate}
\item
If every proper ideal is contained in a prime ideal, then $R$ is idempotent. % = R^2$.
\item
If $R$ is idempotent, then every maximal ideal in $R$ is prime.
\item
If every maximal ideal in $R$ is prime, and every proper ideal is contained in a maximal ideal, then every proper ideal in $R$ is contained in a prime ideal.
\end{enumerate}
In particular, if every proper ideal of $R$ is contained in a maximal ideal, then every maximal ideal of $R$ is a prime ideal if and only if~$R$ is idempotent.

The implications are shown in the following diagram:
\[
\xymatrix@R-10pt{
\text{proper ideals contained in prime ideals} \ar@{=>}[d] \\
\text{$R$ is idempotent} \ar@{=>}[d] \\
\text{maximal ideals are prime} \ar@/_4pc/@{==>}[uu]_{\parbox{3.5cm}{proper ideals contained in maximal ideals}}
}
\]
\end{prp}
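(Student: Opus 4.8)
The plan is to establish the three numbered implications one at a time, and then obtain the final ``if and only if'' by combining them.

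For~(1), I would argue by contraposition. Recall that $R^2=RR$ is always a two-sided ideal of~$R$. If $R$ is not idempotent, then $R^2\neq R$, so $R^2$ is a proper ideal and hence, by hypothesis, contained in some prime ideal~$P$. But then $RR=R^2\subseteq P$, and primeness of~$P$ forces $R\subseteq P$, contradicting that prime ideals are proper. Therefore $R=R^2$.

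For~(2), let $M$ be a maximal ideal and suppose, towards a contradiction, that for some ideals $I,J\subseteq R$ we have $IJ\subseteq M$ while $I\not\subseteq M$ and $J\not\subseteq M$. Then $I+M$ and $J+M$ are ideals properly containing~$M$, so maximality gives $I+M=J+M=R$. Using idempotency of~$R$ in the first step, I would compute
\[
R=R^2=(I+M)(J+M)\subseteq IJ+IM+MJ+M^2\subseteq M,
\]
which contradicts that $M$ is proper; here $IM$, $MJ$, and $M^2$ all lie in~$M$ because $M$ is an ideal. Hence no such $I,J$ exist, i.e.\ $M$ is prime. This is the only step that genuinely uses idempotency, and the only place requiring a short computation, so I would flag it as the engine of the result.

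For~(3), the argument is immediate: given a proper ideal $I$, the second hypothesis places it inside a maximal ideal~$M$, and the first hypothesis guarantees that~$M$ is prime, so $I$ lies in a prime ideal. Finally, for the ``in particular'' statement I would assume throughout that every proper ideal of~$R$ is contained in a maximal ideal. If every maximal ideal is prime, then by~(3) every proper ideal is contained in a prime ideal, whence $R$ is idempotent by~(1); conversely, if $R$ is idempotent, then every maximal ideal is prime by~(2). I do not anticipate any serious obstacle, as the whole argument is elementary; the only point demanding mild care is the expansion of $(I+M)(J+M)$ in~(2) and the observation in~(1) that $R^2$ is an ideal so that the hypothesis applies to it.
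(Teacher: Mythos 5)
Your proposal is correct, and parts~(1) and~(3) coincide with the paper's proof (the same contraposition via the ideal $R^2$, and the same trivial combination of hypotheses). The only divergence is in~(2): the paper passes to the quotient $R/M$, notes that $R=R^2$ forces $(R/M)^2\neq 0$, and invokes the facts that such a quotient by a maximal ideal is simple and that simple rings are prime; you instead verify primeness of $M$ directly from the definition, expanding $R=R^2=(I+M)(J+M)\subseteq IJ+IM+MJ+M^2\subseteq M$ to reach a contradiction. The two arguments are equivalent in substance --- yours is the unwound, quotient-free version, which is slightly more self-contained since it does not rely on the lemma that simple rings are prime, while the paper's is shorter given that background fact. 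Both correctly isolate idempotency of $R$ as the engine of~(2).
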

\begin{proof}
(1)
Suppose on the contrary that $R \neq R^2$. 
Then $R^2$ is a proper ideal, and thus $R^2\subseteq P$ for some prime ideal $P$ of $R$.
This implies that $R=P$, a contradiction.

(2) Let $M$ be a maximal ideal of $R$. Since $R=R^2$, we get $(R/M)^2\ne 0$.
It follows that $R/M$ is a simple ring and so it is a prime ring. Thus $M$ is a prime ideal of $R$.

(3) This is clear.
\end{proof}

%==========================================================================================
For later use, we recall basic results about Lie ideals in rings.
These are well-known, and we include the short proofs for the convenience of the reader.

%==========================================================================================
\begin{lma}
\label{prp:BasicLie}
Let $L$ be a Lie ideal in a ring $R$.
Then
\begin{enumerate}
\item
We have $\widetilde{R}L\widetilde{R} = \widetilde{R}L$.
\item
We have $[R,L^2] \subseteq [R,L]$.
\item
We have $\widetilde{R}[L,L]\widetilde{R} \subseteq L + L^2$.
\item
We have $\widetilde{R}[L,L^2]\widetilde{R} \subseteq L^2$.
\end{enumerate}
\end{lma}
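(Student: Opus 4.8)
The plan is to handle the four parts in order, with (1)--(3) being short and (4) carrying the real content.

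For (1), since $1 \in \widetilde{R}$ the inclusion $\widetilde{R}L \subseteq \widetilde{R}L\widetilde{R}$ is trivial, so I would prove only $\widetilde{R}L\widetilde{R} \subseteq \widetilde{R}L$, for which it suffices to show $L\widetilde{R} \subseteq \widetilde{R}L$. This follows from $lb = bl + [l,b]$ for $l \in L$ and $b \in \widetilde{R}$, since $[l,b] \in [R,L] \subseteq L \subseteq \widetilde{R}L$ and $bl \in \widetilde{R}L$; one then multiplies by $\widetilde{R}$ on the left. For (2), I would use the identity $[r,pq] = [rp,q] - [p,qr]$ for $r \in R$ and $p,q \in L$, which writes $[r,pq]$ as a difference of an element of $[R,L]$ and an element of $[L,R]$, hence of $[R,L]$; passing to generators and using additivity gives $[R,L^2] \subseteq [R,L]$. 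For (3) there is nothing to prove: it is exactly the first inclusion of \autoref{prp:CaseLieIdeal}.

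The substance is (4). The first step is to sharpen (2): the Leibniz rule gives $[r,pq] = [r,p]q + p[r,q] \in L\cdot L = L^2$, so in fact $[R,L^2] \subseteq L^2$, i.e.\ $L^2$ is itself a Lie ideal. Combined with (2), I then have the two facts I will use repeatedly, namely $[R,L^2] \subseteq L^2$ and $[R,L^2] \subseteq L$. The second step is the one-sided absorption $[L,L^2]\widetilde{R} \subseteq L^2$: for $l \in L$, $m \in L^2$ and $b \in \widetilde{R}$ I would use $[l,m]b = [lb,m] - l[b,m]$, where $[lb,m] \in [R,L^2] \subseteq L^2$ by the first step, while $[b,m] \in [R,L^2] \subseteq L$ by (2), so that $l[b,m] \in L\cdot L = L^2$.

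The main obstacle is the passage to the two-sided statement $\widetilde{R}[L,L^2]\widetilde{R} \subseteq L^2$. A naive term-by-term reduction of $a[l,m]b$ is hopeless: pushing the outer factors inward produces summands lying in $\widetilde{R}L^2$, $L^2\widetilde{R}$ or $L^3$, and none of these is contained in $L^2$ in general — already $L^3 \not\subseteq L^2$ for $L = \mathbb{Z}t$ inside $R = \mathbb{Z}[t]$ — so no bound can hold monomial by monomial, and the commutator structure of $[l,m]$ must be exploited. I would therefore avoid the expansion entirely. Put $X := [l,m]b$, which lies in $L^2$ by the one-sided absorption just proved. Writing $a[l,m]b = aX = Xa + [a,X]$, the term $[a,X]$ lies in $[R,L^2] \subseteq L^2$ because $X \in L^2$ and $L^2$ is a Lie ideal, while $Xa = [l,m](ba) \in [L,L^2]\widetilde{R} \subseteq L^2$ by the one-sided absorption again (now with $ba \in \widetilde{R}$). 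Hence $a[l,m]b \in L^2$, and passing to generators and using additivity yields $\widetilde{R}[L,L^2]\widetilde{R} \subseteq L^2$. The only bookkeeping point is that all these identities hold over $\widetilde{R}$ rather than merely $R$, which is immediate from $[\widetilde{R},Y] = [R,Y]$ for $Y \subseteq R$.
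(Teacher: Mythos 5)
Your proof is correct. Parts (1)--(3) follow the paper's own reasoning: the same commutator-swap identities for (1) and (2), and for (3) the paper likewise begins by citing \autoref{prp:CaseLieIdeal} (before offering an alternative direct argument). In part (4), the substantive one, you take a genuinely different route. The paper first collapses the two-sided product to a one-sided one, $\widetilde{R}[L,L^2]\widetilde{R}=\widetilde{R}[L,L^2]$, by applying part (1) to $[L,L^2]$ --- which tacitly requires that $[L,L^2]$ is itself a Lie ideal, a Jacobi-identity verification hidden in the phrase ``proceeding at the first two steps as in (3)'' --- and then expands $\widetilde{R}[L,L^2]\subseteq[\widetilde{R}L,L^2]+[\widetilde{R},L^2]L\subseteq L^2+[R,L]L\subseteq L^2$, using $[R,L^2]\subseteq L^2$ and part (2). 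You instead prove the right-handed absorption $[L,L^2]\widetilde{R}\subseteq L^2$ directly from the mirrored identity $[l,m]b=[lb,m]-l[b,m]$, and then remove the left factor via the flip $aX=Xa+[a,X]$ with $X=[l,m]b\in L^2$, using only that $L^2$ is a Lie ideal. Your route never invokes part (1) in the proof of (4) and avoids checking that $[L,L^2]$ is a Lie ideal; it also makes explicit, via the Leibniz rule, the fact $[R,L^2]\subseteq L^2$ that the paper uses without proof. The paper's route, in exchange, gives a uniform two-step scheme in which (4) is a one-line variation of (3). Both arguments are elementary and of comparable length, and your bookkeeping points ($lb\in R$ because $R$ is an ideal of $\widetilde{R}$, and $[\widetilde{R},Y]=[R,Y]$ for $Y\subseteq R$) are handled correctly.
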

\begin{proof}
We will use that $[\widetilde{R},V]=[R,V]$ for every additive subgroup $V \subseteq \widetilde{R}$.
In the proof of~(3) and~(4), we will use that $U[V,W] \subseteq [UV,W] + [W,U]V$ for additive subgroups $U,V,W \subseteq R$.

(1) The inclusion $\widetilde{R}L \subseteq \widetilde{R}L\widetilde{R}$ is clear.
We therefore have
\[
\widetilde{R}L\widetilde{R}
\subseteq [\widetilde{R}L,\widetilde{R}] + \widetilde{R}^2L
\subseteq \widetilde{R}[L,\widetilde{R}] + [\widetilde{R},\widetilde{R}]L + \widetilde{R}^2L
\subseteq \widetilde{R}L.
\]

(2)
Let $x,y \in L$ and $a \in R$.
Then
\[
[a,xy]
= axy - xya
= axy - yax + yax - xya
= [ax,y]+[ya,x]
\in [R,L].
\]

In fact, if $V$ is any additive subgroup of $R$, then the same argument shows that $[a,x] \in [R,V]$ for every $a \in R$ and every $x$ in the subring of $R$ generated by $V$.

(3)
This was already proved in \autoref{prp:CaseLieIdeal}.
Let us give an alternative proof here.
Since $[L,L]$ is a again a Lie ideal, we can apply~(1) at the first step, and get
\[
\widetilde{R}[L,L]\widetilde{R} 
= \widetilde{R}[L,L]
\subseteq [\widetilde{R}L,L] + [L,\widetilde{R}]L
\subseteq L + L^2,
\]

(4)
Proceeding at the first two steps as in~(3), and using that $[R,L^2] \subseteq L^2$ and~(2) at the third step, we get
\[
\widetilde{R}[L,L^2]\widetilde{R} 
= \widetilde{R}[L,L^2]
\subseteq [\widetilde{R}L,L^2] + [\widetilde{R},L^2]L
\subseteq L^2 + [R,L]L
\subseteq L^2.
\]
as desired.
\end{proof}

%==========================================================================================
\begin{thm}
\label{prp:LLFull}
Let $R$ be a ring such that every proper ideal is contained in a prime ideal, and such that $[R,R]$ is full.
Let $L \subseteq R$ be a Lie ideal.
Then the following are equivalent:
\begin{enumerate}
\item
The subgroup $[L,L]$ is full.
\item
The subgroup $[L,L^2]$ is full.
\item
We have $R=L^2$.
\item
We have $[R,R] \subseteq L$.
\end{enumerate}
Moreover, if this is the case, then $[R,R] = [R,L]$.
\end{thm}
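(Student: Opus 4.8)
To prove \autoref{prp:LLFull}, the strategy is to exploit the machinery already built in the previous section, together with the prime-ideal hypothesis to pass from ``fullness'' (a global non-containment statement) to concrete equalities. I would prove the cyclic chain of implications $(1)\Rightarrow(2)\Rightarrow(3)\Rightarrow(4)\Rightarrow(1)$, and then establish the final equality $[R,R]=[R,L]$ separately.

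\textbf{The implications.}
First I would handle $(4)\Rightarrow(1)$, which is the easiest: if $[R,R]\subseteq L$, then since $[R,R]$ is full and $[L,L]\supseteq\big[[R,R],[R,R]\big]$ after one more application of the submodule structure, fullness should propagate downward; more carefully, $(4)$ makes $L$ a Lie ideal containing all commutators, and one uses \autoref{prp:CaseLieIdeal} (which gives $[R,\widetilde{R}[L,L]\widetilde{R}]\subseteq L$) together with the prime-ideal argument to see $[L,L]$ cannot sit in a proper ideal. For $(1)\Rightarrow(2)$, the key is \autoref{prp:BasicLie}(4), which gives $\widetilde{R}[L,L^2]\widetilde{R}\subseteq L^2$, combined with the relation between $[L,L]$ and $[L,L^2]$; I expect that fullness of $[L,L]$ forces fullness of $L^2$ as an ideal, and then one bootstraps to $[L,L^2]$. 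For $(2)\Rightarrow(3)$, the point is that $[L,L^2]$ full means the ideal it generates is all of $R$, and by \autoref{prp:BasicLie}(4) that ideal is contained in $L^2$, so $R=L^2$ directly. For $(3)\Rightarrow(4)$, with $R=L^2$ one uses \autoref{prp:BasicLie}(3), namely $\widetilde{R}[L,L]\widetilde{R}\subseteq L+L^2=L+R$, and feeds this through \autoref{prp:CaseLieIdeal} to land $[R,R]$ inside $L$.

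\textbf{The role of prime ideals.}
The crucial use of the hypothesis that every proper ideal lies in a prime ideal is to convert the containments $\widetilde{R}[L,L]\widetilde{R}\subseteq L+L^2$ and $\widetilde{R}[L,L^2]\widetilde{R}\subseteq L^2$ into fullness statements: if a product-type set is full, a prime ideal $P$ containing one of two factors must contain the full set, a contradiction. Concretely, to show $L^2$ is full from fullness of $[L,L]$, I would argue that any prime $P$ with $L^2\subseteq P$ would, via $\widetilde{R}[L,L]\widetilde{R}\subseteq L+L^2$, force $[L,L]$ into $P$ after accounting for the $L$-summand by a further commutator/primeness step, contradicting fullness. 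This ``prime ideals detect fullness of products'' mechanism is exactly what \autoref{prp:MaxVsPrime} and the surrounding discussion set up.

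\textbf{The main obstacle.}
The hard part will be the bookkeeping needed to pass the $L$-summand through the prime ideal argument, since the inclusions from \autoref{prp:BasicLie} all produce $L+L^2$ rather than a clean ideal. The difficulty is that $L$ itself need not be an ideal, so one cannot simply say ``$L+L^2\subseteq P$ implies $L^2\subseteq P$.'' The resolution I anticipate is to commutate once more: apply $[R,-]$ to the inclusion and use $[R,L]\subseteq L$ together with the fact that $[R,\cdot]$ kills the obstruction modulo a prime, so that the genuinely full object $[R,R]$ (or $[L,L]$) is forced into $P$. Getting this commutator step to interact correctly with primeness, and verifying the final claim $[R,R]=[R,L]$ — where $\subseteq$ follows from $(4)$ and $\supseteq$ is immediate since $L\subseteq R$ — is where the care lies.
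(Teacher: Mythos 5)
Your proposal has a genuine gap at its core: the prime-ideal mechanism you invoke is not the one that actually works here. You describe a ``prime ideals detect fullness of products'' principle (a prime $P$ containing one of two factors must contain the product), but the sets in question, $[L,L]$ and $[L,L^2]$, are not products of ideals, so primeness in the sense of $IJ\subseteq P\Rightarrow I\subseteq P$ or $J\subseteq P$ gives no traction on them. The step where the prime hypothesis is genuinely needed is (4)$\Rightarrow$(1) (and (4)$\Rightarrow$(2)), and the ingredient that makes it work is a structure theorem for prime rings: if $S$ is prime and $[[S,S],[S,S]]=\{0\}$ (or $[[S,S],[S,S]^2]=\{0\}$), then $S$ is commutative; this is \cite[Theorem~2.3]{GarThi25GenByCommutators}. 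Concretely, one sets $I:=\widetilde{R}[[R,R],[R,R]]\widetilde{R}$; if $I$ were proper it would lie in a prime ideal $P$, the quotient $R/P$ would then be commutative by that theorem, contradicting fullness of $[R,R]$; hence $[[R,R],[R,R]]$ is full, and under (4) it is contained in $[L,L]$. Without this ingredient, neither your (4)$\Rightarrow$(1) (``fullness should propagate downward'') nor your (1)$\Rightarrow$(2) bootstrap can be completed; ``commutating once more'' modulo a prime is not a substitute, because the obstruction is not of product type.

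Second, your implication (3)$\Rightarrow$(4) is broken as written. From $R=L^2$ you invoke \autoref{prp:BasicLie}(3) to get $\widetilde{R}[L,L]\widetilde{R}\subseteq L+L^2=R$, which is vacuous, and \autoref{prp:CaseLieIdeal} only yields $[R,R]\subseteq L$ if you already know $\widetilde{R}[L,L]\widetilde{R}=R$, i.e.\ that $[L,L]$ is full --- exactly what is unavailable under (3) alone. The correct argument is a one-line computation using \autoref{prp:BasicLie}(2):
\[
[R,R]=[L^2,L^2]=[R,L^2]\subseteq[R,L]\subseteq L.
\]
(Your closing remark that $[R,R]\subseteq[R,L]$ ``follows from (4)'' is also too quick: $[R,R]\subseteq L$ does not by itself give $[R,R]\subseteq[R,L]$; one needs $R\subseteq L+L^2$ from \autoref{prp:BasicLie}(3), whence $[R,R]\subseteq[R,L+L^2]\subseteq[R,L]$.) The paper's route --- (1)$\Rightarrow$(4), (2)$\Rightarrow$(3), (3)$\Rightarrow$(4) by direct computation with \autoref{prp:BasicLie}, and then (4)$\Rightarrow$(1),(2) via the prime-quotient commutativity theorem --- avoids both problems; your cyclic chain (1)$\Rightarrow$(2)$\Rightarrow$(3)$\Rightarrow$(4)$\Rightarrow$(1) cannot be repaired without reorganizing it along these lines, since the direct passage (1)$\Rightarrow$(2) has no known proof that bypasses first establishing (4).
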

\begin{proof}
We first show that~(1) implies~(4).
Assuming that $[L,L]$ is full, we apply \autoref{prp:BasicLie}(3) at the second step to get
\[
R = \widetilde{R}[L,L]\widetilde{R} 
\subseteq L + L^2.
\]
Using the above at the first step, and \autoref{prp:BasicLie}(2)
at the second step, we obtain
\[
[R,R]
\subseteq [R,L+L^2]
\subseteq [R,L]
\subseteq L.
\]

To show that~(2) implies~(3), assume that $[L,L^2]$ is full.
Applying \autoref{prp:BasicLie}(4) at the second step, we get
\[
R = \widetilde{R}[L,L^2]\widetilde{R} 
\subseteq L^2.
\]

To show that~(3) implies~(4), assume that $R=L^2$.
Applying \autoref{prp:BasicLie}(2) at the third step, we get
\[
[R,R]
= [L^2,L^2]
= [R,L^2]
\subseteq [R,L]
\subseteq L.
\]

Next, we show that~(4) implies~(1) and~(2).
Assume that $[R,R] \subseteq L$.
To verify~(1), set $I := \widetilde{R}[[R,R],[R,R]]\widetilde{R}$.
If $I \neq R$, then by assumption we obtain a prime ideal $P \subseteq R$ containing $I$.
Then the quotient~$R/I$ is a prime ring satisfying $[[R/I,R/I],[R/I,R/I]]=\{0\}$, which by \cite[Theorem~2.3]{GarThi25GenByCommutators} implies that $R/I$ is commutative.
This contradicts the fact that commutators are full in $R$, and hence in $R/I$.
Thus, $I=R$, which means that $[[R,R],[R,R]]$ is full, and consequently so is $[L,L]$.

Similarly, to verify~(2), if the ideal $J$ generated by $[[R,R],[R,R]^2]$ is proper, then it is contained in a prime ideal $Q$ such that $[[Q/J,Q/J],[Q/J,Q/J]^2]=\{0\}$, which also implies that $Q/J$ is commutative by \cite[Theorem~2.3]{GarThi25GenByCommutators}, a contradiction.
This shows that $[[R,R],[R,R]^2]$ is full, and hence so is $[L,L^2]$.

Finally, we have seen in the proof of `(1)$\Rightarrow$(4)' that $[R,R] \subseteq [R,L]$.
\end{proof}

%==========================================================================================
\begin{cor}
Let $R$ be a ring such that every proper ideal is contained in a maximal ideal.  
If $L$ is a Lie ideal of $R$ and if $[L, L]$ is full, then $R=L^2$.
\end{cor}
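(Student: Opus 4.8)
The plan is to deduce the corollary from \autoref{prp:LLFull}. That theorem already supplies the implication `$[L,L]$ full $\Rightarrow R = L^2$', but under two hypotheses: that every proper ideal of $R$ is contained in a \emph{prime} ideal, and that $[R,R]$ is full. The corollary instead only assumes that every proper ideal is contained in a \emph{maximal} ideal, so the entire task is to recover these two hypotheses from the standing assumption that $[L,L]$ is full.

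First I would check that $[R,R]$ is full. As $L \subseteq R$, we have $[L,L] \subseteq [R,R]$ as additive subgroups; any proper ideal containing $[R,R]$ would then contain the full set $[L,L]$, which is impossible, so $[R,R]$ is full.

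The crux is to upgrade `maximal' to `prime', and the key observation is that fullness of $[L,L]$ forces $R$ to be idempotent. Indeed, each generator $[x,y] = xy - yx$ of $[L,L]$ lies in the ideal $R^2$, so $[L,L] \subseteq R^2$; since $[L,L]$ is contained in no proper ideal, $R^2$ cannot be proper, i.e.\ $R = R^2$. Now \autoref{prp:MaxVsPrime}(2) shows that every maximal ideal of $R$ is prime, and combining this with the hypothesis that every proper ideal lies in a maximal ideal, \autoref{prp:MaxVsPrime}(3) yields that every proper ideal of $R$ is contained in a prime ideal.

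With both hypotheses of \autoref{prp:LLFull} now in force, its implication `(1)$\Rightarrow$(3)' applies and gives $R = L^2$, completing the proof. I do not anticipate a genuine obstacle; the single step that requires care is the idempotency argument, since it is precisely what converts the available `maximal ideal' hypothesis into the `prime ideal' hypothesis that \autoref{prp:LLFull} demands.
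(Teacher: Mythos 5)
Your proposal is correct and follows essentially the same route as the paper: deduce idempotency of $R$ from fullness of $[L,L]$, upgrade maximal ideals to prime ideals via \autoref{prp:MaxVsPrime}, and then invoke \autoref{prp:LLFull}. Your explicit verification that $[R,R]$ is full (via $[L,L]\subseteq[R,R]$) is a detail the paper leaves implicit, but it is needed for the hypotheses of \autoref{prp:LLFull} and you handle it correctly.
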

\begin{proof}
Using that $[L, L]$ is full, it follows that $R$ is idempotent, and thus every maximal ideal of $R$ is a prime ideal by 
\autoref{prp:MaxVsPrime}.
Hence every proper ideal of~$R$ is contained in a prime ideal of $R$. 
It follows from \autoref{prp:LLFull} that $R=L^2$.
\end{proof}

%==========================================================================================
\begin{cor}
\label{prp:GenByCommutators}
Let $R$ be a ring such that every proper ideal is contained in a prime ideal.
Then the following are equivalent:
\begin{enumerate}
\item
The subgroup $[[R,R],[R,R]]$ is full. %, that is, we have $R=\widetilde{R}[[R,R],[R,R]]\widetilde{R}$.
\item
The subgroup $[[R,R],[R,R]^2]$ is full.
\item
We have $R = [R,R]^2$.
\item
The commutator subgroup $[R,R]$ is full. %, that is, $R=\widetilde{R}[R,R]\widetilde{R}$.
\end{enumerate}
Moreover, if this is the case, then $[R,R] = [R,[R,R]]$.
\end{cor}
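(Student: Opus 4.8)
The plan is to derive this as a special case of \autoref{prp:LLFull}, taking the Lie ideal there to be $L := [R,R]$ itself. The first thing I would record is that $[R,R]$ really is a Lie ideal: for all $a,b,c \in R$ the element $[a,[b,c]]$ is again a commutator, so $[R,[R,R]] \subseteq [R,R]$.

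The one wrinkle is that \autoref{prp:LLFull} carries the standing hypothesis that $[R,R]$ is full, while here that very fullness is condition~(4). I would therefore first show separately that each of~(1), (2), and~(3) implies~(4), so that the theorem's hypothesis is guaranteed before I invoke it. The tool is the elementary monotonicity of fullness: if $X \subseteq Y \subseteq R$ and $X$ is full, then $Y$ is full, because the ideal generated by $X$ sits inside the ideal generated by $Y$. Applying this: the inclusion $[[R,R],[R,R]] \subseteq [R,R]$ handles (1)$\,\Rightarrow\,$(4); every additive generator of $[[R,R],[R,R]^2]$ is a commutator of two elements of $R$, so $[[R,R],[R,R]^2] \subseteq [R,R]$ handles (2)$\,\Rightarrow\,$(4); and $R = [R,R]^2$ places $R$ inside the ideal generated by $[R,R]$, handling (3)$\,\Rightarrow\,$(4).

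With~(4) now available, I would apply \autoref{prp:LLFull} with $L = [R,R]$. Its condition~(4), namely $[R,R] \subseteq L$, holds trivially, so all four equivalent conditions of the theorem hold; rewriting them through $L = [R,R]$ yields conditions~(1), (2), and~(3) of the corollary, closing the equivalence. The final identity $[R,R] = [R,[R,R]]$ is precisely the ``moreover'' clause $[R,R] = [R,L]$ of \autoref{prp:LLFull}. I do not anticipate any real difficulty, since all the substance lives in \autoref{prp:LLFull}; the only point genuinely needing care — and thus the sole ``obstacle'' — is to extract fullness of $[R,R]$ (condition~(4)) from each of the other conditions \emph{first}, so that the theorem's hypothesis is legitimately in force before the theorem is cited.
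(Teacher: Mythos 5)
Your proof is correct and follows essentially the same route as the paper: the paper likewise first notes that each of (1), (2), (3) implies fullness of $[R,R]$, then applies \autoref{prp:LLFull} with $L=[R,R]$, using the trivial inclusion $[R,R]\subseteq L$ to recover the remaining conditions and the identity $[R,R]=[R,[R,R]]$ from its ``moreover'' clause. Your explicit verification of the monotonicity-of-fullness steps is just a more detailed write-up of what the paper states in one line.
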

\begin{proof}
We note that each of the conditions~(1), (2) and~(3) implies that $[R,R]$ is full.
Hence, their equivalence follows from \autoref{prp:LLFull} applied for the Lie ideal $L=[R,R]$.
Further, condition~(3) clearly implies~(4).
Next, assuming that~(4) holds, it also follows from \autoref{prp:LLFull} (again applied for the Lie ideal $L=[R,R]$) that $[[R,R],[R,R]]$ is full, which verifies~(1).
\end{proof}

%==========================================================================================
In the setting of \autoref{prp:LLFull}, we saw conditions characterizing when $[L,L]$ is full for a Lie ideal $L$ in a ring $R$.
Clearly, these conditions also imply that $L$ is fully noncentral, that is, $[R,L]$ is full.
However, the following examples show that the converse does not hold.

%==========================================================================================
\begin{exa}
\label{exa:ExceptionalMatrixAlg}
Let $\mathbb{F}$ be a field of characteristic~$2$, and consider the simple, unital ring $R=M_2(\mathbb{F})$ of $2$-by-$2$ matrices over $\mathbb{F}$.
Consider the subgroup
\[
L= \left\{ 
\begin{pmatrix}
a & b \\
b & a \\
\end{pmatrix}\colon a,b \in \mathbb{F}
\right\}.
\]

One readily sees that $[R,L]=L$, and thus $L \nsubseteq Z(R)$, while $[L,L]=\{0\}$ and  $L^2=L \neq R$.
We also have $[R,R] \nsubseteq L$.
In particular, $L$ is a fully noncentral Lie ideal that does not generate $R$ as a ring, and that does not contain the commutator subgroup.
In light of \autoref{prp:CaseRRSubmodule}, we also compute the (higher) commutator subgroups of $R$ as:
\[
R^{(1)} 
= [R,R]
= \left\{ 
\begin{pmatrix}
a & b \\
c & a \\
\end{pmatrix}\colon a,b,c \in \mathbb{F}
\right\},
\quad
R^{(2)} 
%= [[R,R],[R,R]] 
= Z(R), \andSep
R^{(3)} 
= \{0\}.
\]
In particular, $R$ is fully noncentral, while $[R^{(2)},R^{(3)}]=\{0\}$.
\end{exa}

%==========================================================================================
\begin{exa}
\label{exa:ExceptionalDivAlg}
Let $R$ be a $4$-dimensional central division algebra of characteristic~$2$. 
Then there exists a Lie ideal $L$ of $R$ such that neither $[R, R]\subseteq L$ nor $L\subseteq Z(R)$.
To see this, let $K$ be a maximal subfield of $R$, and let $\{1,\mu\}$ be a basis of $K$ over~$Z(R)$. 
Then $R\otimes_{Z(R)}K\cong \text{\rm M}_2(K)$.

Given a Lie ideal $\widetilde{L}$ of the $K$-algebra $R\otimes_{Z(R)}K$, set
\[
L = \big\{ r\in R\colon \mbox{ there is } s\in R \mbox{ such that }r\otimes 1 + s\otimes \mu \in \widetilde{L} \big\}.
\]
Clearly, $L$ is a Lie ideal of the $Z(R)$-algebra $R$. 
Moreover, $\dim_{Z(R)}L=\dim_K\widetilde {L}$.
In particular, if $\dim_K\widetilde {L}=2$ (as in \autoref{exa:ExceptionalMatrixAlg}), then neither $[R, R]\subseteq L$ nor $L\subseteq Z(R)$.
\end{exa}

%==========================================================================================
%==========================================================================================
\section{Rings where proper ideals are contained in non-exceptional prime ideals}
\label{sec:CofinalNonexcPrimeIdls}

%==========================================================================================
To extend the results of \autoref{sec:CofinalPrimeIdeals} to fully noncentral Lie ideals and $[R,R]$-submodules, we need to exclude prime rings as in Examples~\ref{exa:ExceptionalMatrixAlg} and~\ref{exa:ExceptionalDivAlg}.
These are exactly the prime rings where Herstein's techniques for the study of Lie ideals break down (see, for example, \cite[p.120]{LanMon72LieStrPrimeChar2}), and we call them \emph{exceptional};
see \autoref{dfn:Exceptional}.

In this section, we study rings where every proper ideal is contained in a non-exceptional prime ideal.
We show that an $[R,R]$-submodule $V$ in such a ring is fully noncentral if and only if it contains $[R,R]$;
see \autoref{prp:CharFNoncentralRRSubmod}.
It follows that a fully noncentral subgroup $V$ is an $[R,R]$-submodule if and only if it is a Lie ideal, if and only if $[R,R] \subseteq V$;
see \autoref{prp:TFAE-FNSubgp}.

We note that in an algebra over a field of characteristic $\neq 2$, every prime ideal is automatically non-exceptional.
In particular, the class of rings where every proper ideal is contained in a non-exceptional prime ideal includes every \ca{} \cite{GarThi24PrimeIdealsCAlg}.

\medskip

%==========================================================================================
For $n\geq 1$, we write $S_n$ for the permutation group on $n$ elements.
The \emph{standard polynomial of degree $n$} (see, for example \cite[Definition~6.10]{Bre14BookIntroNCAlg}) is the polynomial $s_n\in \mathbb{Z}[x_1,\ldots,x_n]$ given by
\[
s_n(x_1,\ldots,x_n)=\sum_{\sigma\in S_n}\mathrm{sign}(\sigma)x_{\sigma(1)}\cdots x_{\sigma(n)}.\]
We will only need the polynomial $s_4$. 
For a ring $R$, we let $S_4(R)$ denote the additive subgroup of~$R$ generated by the elements $s_4(x_1,\ldots,x_4)$ for $x_1,\ldots,x_4 \in R$.
A ring~$R$ is said to \emph{satisfy the polynomial identity $s_4$} if $s_4(R)=\{0\}$.

Recall that a prime ring $R$ is said to have \emph{characteristic~$2$} if $2x=0$ for every $x \in R$. (Equivalently, if $2x=0$ for some nonzero $x \in R$.)

%==========================================================================================
\begin{dfn}
\label{dfn:Exceptional}
We say that a prime ring $R$ is \emph{exceptional} if it has characteristic~$2$ and it satisfies the polynomial identity~$s_4$.
We say that a prime ideal~$P$ in a ring~$S$ is \emph{exceptional} if the prime ring $S/P$ is exceptional.
\end{dfn}

%==========================================================================================
Let $R$ be an exceptional prime ring.
Then $R$ is commutative if and only if~$R$ is a field of characteristic~$2$.
If $R$ is noncommutative, then $R$ is a prime PI-ring and therefore has nonzero center $Z$, the extended centroid of $R$ 
(see \cite[Definition~7.29]{Bre14BookIntroNCAlg})
agrees with the field of fractions $ZZ^{-1}$, and the ring of central quotients $RZ^{-1}$ is isomorphic to the ring  $M_2(ZZ^{-1})$ of $2$-by-$2$ matrices over the field $ZZ^{-1}$;
see \cite[Section~7.9]{Bre14BookIntroNCAlg}.
It follows that a prime ring $R$ is exceptional if and only if it embeds into $M_2(\mathbb{F})$ for some field $\mathbb{F}$ of characteristic~$2$; see \cite[Corollary~7.59]{Bre14BookIntroNCAlg}.
We also note that a prime ring $R$ is exceptional if and only if $2R+\widetilde{R}S_4(R)\widetilde{R} = \{0\}$.

%==========================================================================================
\begin{prp}
\label{prp:ExceptionalPrimeIdls}
Let $R$ be a ring.
Then the following hold:
\begin{enumerate}
\item
If $R = 2R + \widetilde{R}S_4(R)\widetilde{R}$, then every prime ideal of $R$ is non-exceptional.
\item
If every proper ideal of $R$ is contained in a non-exceptional prime ideal, then $R = 2R + \widetilde{R}S_4(R)\widetilde{R}$.
\end{enumerate}

In particular, every proper ideal of $R$ is contained in a non-exceptional prime ideal if and only if $R = 2R + \widetilde{R}S_4(R)\widetilde{R}$ and every proper ideal of $R$ is contained in a prime ideal.
\end{prp}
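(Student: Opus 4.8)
The plan is to reduce everything to the characterization recorded just before the proposition: a prime ring $R$ is exceptional if and only if $2R + \widetilde{R}S_4(R)\widetilde{R} = \{0\}$. The crucial technical observation is that the additive subgroup $2R + \widetilde{R}S_4(R)\widetilde{R}$ is compatible with quotients. Indeed, for an ideal $P \subseteq R$ with quotient map $\pi \colon R \to R/P$, we have $\pi(S_4(R)) = S_4(R/P)$ because $s_4$ has integer coefficients and $\pi$ is a surjective ring homomorphism; since a surjective ring homomorphism carries the ideal generated by a subset onto the ideal generated by its image, $\pi$ maps the ideal $\widetilde{R}S_4(R)\widetilde{R}$ onto $\widetilde{R/P}S_4(R/P)\widetilde{R/P}$. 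Combined with $\pi(2R) = 2(R/P)$, this yields
\[
\pi\big( 2R + \widetilde{R}S_4(R)\widetilde{R} \big) = 2(R/P) + \widetilde{R/P}S_4(R/P)\widetilde{R/P}.
\]

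For part~(1), I would take an arbitrary prime ideal $P$ of $R$, so that $P$ is proper and $R/P \neq \{0\}$. Applying $\pi$ to the assumed equality $R = 2R + \widetilde{R}S_4(R)\widetilde{R}$ and using the displayed identity gives $R/P = 2(R/P) + \widetilde{R/P}S_4(R/P)\widetilde{R/P}$. As $R/P \neq \{0\}$, the right-hand side is nonzero, so by the characterization the prime ring $R/P$ is non-exceptional; that is, $P$ is a non-exceptional prime ideal.

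For part~(2), I would set $I := 2R + \widetilde{R}S_4(R)\widetilde{R}$, which is an ideal of $R$, and argue by contradiction. If $I \neq R$, then by hypothesis $I$ is contained in some non-exceptional prime ideal $P$. But $I \subseteq P$ means $\pi(I) = \{0\}$, and the displayed identity then reads $2(R/P) + \widetilde{R/P}S_4(R/P)\widetilde{R/P} = \{0\}$, forcing $R/P$ to be exceptional and contradicting the choice of $P$. Hence $I = R$, which is exactly the desired equality. The final equivalence follows by combining the two parts: one direction is immediate, since if every proper ideal lies in a non-exceptional prime ideal then in particular every proper ideal lies in a prime ideal, and part~(2) supplies $R = 2R + \widetilde{R}S_4(R)\widetilde{R}$; for the converse, given both conditions and a proper ideal $J$, I would choose a prime ideal $P \supseteq J$ by the second condition and invoke part~(1), which applies because $R = 2R + \widetilde{R}S_4(R)\widetilde{R}$, to conclude that $P$ is non-exceptional.

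The only genuinely delicate point is the displayed compatibility of $2R + \widetilde{R}S_4(R)\widetilde{R}$ with the quotient map, in particular the bookkeeping with the unitization $\widetilde{R}$ when $R/P$ is non-unital. This is, however, the standard fact that a surjective ring homomorphism sends the ideal generated by a subset onto the ideal generated by its image, specialized to the subset $S_4(R)$, so I expect no real difficulty beyond making the identification precise.
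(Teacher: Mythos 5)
Your proposal is correct and follows essentially the same route as the paper's own proof: both parts reduce to the characterization that a prime ring $S$ is exceptional if and only if $2S + \widetilde{S}S_4(S)\widetilde{S} = \{0\}$, with part~(1) obtained by passing the equality $R = 2R + \widetilde{R}S_4(R)\widetilde{R}$ to the quotient $R/P$, and part~(2) by contradiction applied to the ideal $I = 2R + \widetilde{R}S_4(R)\widetilde{R}$. The only difference is that you spell out the quotient-compatibility identity $\pi\big(2R + \widetilde{R}S_4(R)\widetilde{R}\big) = 2(R/P) + \widetilde{R/P}S_4(R/P)\widetilde{R/P}$, which the paper uses implicitly without comment.
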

\begin{proof}
(1) 
Assume that $R = 2R + \widetilde{R}S_4(R)\widetilde{R}$, and let $P$ be a prime ideal of $R$.
Then $R/P = 2(R/P) + \widetilde{R/P}S_4(R/P)\widetilde{R/P} \neq \{0\}$, which implies that $R/P$ is non-exceptional.

(2)
Assume that every proper ideal of $R$ is contained in a non-exceptional prime ideal.
To reach a contradiction, assume that the ideal $I := 2R + \widetilde{R}S_4(R)\widetilde{R}$ is proper.
By assumption, we obtain a non-exceptional prime ideal $P \subseteq R$ containing~$I$.
Then $2(R/P) + \widetilde{R/P}S_4(R/P)\widetilde{R/P} =\{0\}$, which shows that $R/P$ is exceptional, a contradiction.
\end{proof}

%==========================================================================================
Given an additive subgroup $V$ in a ring, recall that $V^{(n)}$ is defined inductively for $n \geq 0$ as $V^{(0)} := V$ and $V^{(n+1)} := [V^{(n)}, V^{(n)}]$.

%==========================================================================================
\begin{lma}
\label{prp:VmVn}
Let $R$ be a non-exceptional prime ring, and let $V \subseteq R$ be an $[R,R]$-submodule with $[R,V] \neq \{0\}$.
Then $[ V^{(m)}, V^{(n)} ]$ is not central for every $m,n \geq 1$.
\end{lma}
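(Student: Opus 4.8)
The plan is to reduce the statement to the single assertion that each iterated commutator subgroup $V^{(k)}$ is noncentral, and then to establish the latter by induction, with the inductive step resting on the structure theory of prime rings.

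First I would dispose of the cross terms. Fix $m,n\ge 1$ and assume without loss of generality that $m\le n$. By \autoref{prp:T-RRSubmodule}, each $V^{(j)}$ with $j\ge 1$ is again an $[R,R]$-submodule and the sequence is decreasing, $V^{(1)}\supseteq V^{(2)}\supseteq\cdots$, so that $V^{(n)}\subseteq V^{(m)}$ and therefore
\[
[V^{(m)},V^{(n)}] \ \supseteq \ [V^{(n)},V^{(n)}] \ = \ V^{(n+1)}.
\]
Since a superset of a noncentral set is noncentral, it suffices to prove that $V^{(k)}\not\subseteq Z(R)$ for every $k\ge 0$. Recalling that in a prime ring an additive subgroup $X$ satisfies $[R,X]=\{0\}$ if and only if $X\subseteq Z(R)$, the hypothesis $[R,V]\neq\{0\}$ says precisely that $V=V^{(0)}$ is noncentral. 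I would thus run an induction whose base case is the hypothesis and whose inductive step is the following core claim, applied to the $[R,R]$-submodule $W=V^{(k)}$.

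The core claim is: if $W$ is an $[R,R]$-submodule of a non-exceptional prime ring $R$ with $W\not\subseteq Z(R)$, then $[W,W]\not\subseteq Z(R)$. I argue by contradiction, assuming $[W,W]\subseteq Z(R)$ and fixing $w\in W$. Writing $\operatorname{ad}_{w}(x)=[x,w]$, three elementary observations package the hypotheses: $\operatorname{ad}_{w}(R)\subseteq[R,R]$ because every $[x,w]$ is a commutator; $\operatorname{ad}_{w}([R,R])\subseteq W$ because $W$ is an $[R,R]$-submodule; and $\operatorname{ad}_{w}(W)\subseteq Z(R)$ because $[W,W]\subseteq Z(R)$. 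Composing these yields $\operatorname{ad}_{w}^{2}(R)\subseteq W$, then $\operatorname{ad}_{w}^{3}(R)\subseteq Z(R)$, and, since $\operatorname{ad}_{w}^{2}(W)=\{0\}$ (a central element commutes with $w$), finally $\operatorname{ad}_{w}^{4}=0$; in particular every element of $W$ is ad-nilpotent.

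The hard part is to conclude $W\subseteq Z(R)$ from this, and this is exactly where non-exceptionality must enter: ad-nilpotency alone is far too weak, since any nilpotent element of $M_2(\mathbb{F})$ is ad-nilpotent without being central, and indeed the core claim genuinely fails for the exceptional ring $M_2(\mathbb{F})$ with $\operatorname{char}\mathbb{F}=2$, where $W=[R,R]$ is noncentral yet $[W,W]\subseteq Z(R)$ (see \autoref{exa:ExceptionalMatrixAlg}). I would therefore pass to the central closure $RC$ of $R$ over its extended centroid $C$ (a field), observe that $WC$ is a noncentral $[RC,RC]$-submodule of the centrally closed prime $C$-algebra $RC$ still satisfying $[WC,WC]\subseteq C$, and split into two cases. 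If $RC$ satisfies a polynomial identity, then by the structure theory of prime PI-rings it is a finite-dimensional central simple $C$-algebra, i.e.\ a matrix ring over a division algebra, where the adjoint action of $[RC,RC]$ can be analysed directly and non-exceptionality excludes the sole obstruction, namely $M_2$ in characteristic~$2$. If $RC$ satisfies no polynomial identity, then the theory of generalized polynomial identities rules out a noncentral $[RC,RC]$-submodule with central commutators. I expect this case division---especially the bookkeeping in the characteristic-$2$, non-exceptional regime, where Herstein's elementary commutator manipulations break down---to be the main obstacle, and it is here that one leans on the theory of Lie ideals in prime rings developed by Herstein and by Lanski--Montgomery, or alternatively on a reduction of $W$ to a noncentral Lie ideal to which the classical result ``$[U,U]\subseteq Z(R)\Rightarrow U\subseteq Z(R)$'' applies.
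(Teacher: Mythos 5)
Your reduction and induction scheme coincide with the paper's own: via \autoref{prp:T-RRSubmodule} you reduce the lemma to showing $V^{(k)} \nsubseteq Z(R)$ for every $k \geq 0$, the base case being the hypothesis $[R,V] \neq \{0\}$, and the inductive step being your ``core claim'' that a noncentral $[R,R]$-submodule $W$ of a non-exceptional prime ring satisfies $[W,W] \nsubseteq Z(R)$. Everything you actually work out is correct, including the observation $\operatorname{ad}_w^4 = 0$ for $w \in W$ under the contradiction hypothesis $[W,W] \subseteq Z(R)$, and you correctly identify both where non-exceptionality must enter and the relevant counterexample (your $M_2(\mathbb{F})$ example is the paper's \autoref{exa:ExceptionalMatrixAlg}).

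The genuine gap is that the core claim --- which is the entire mathematical content of the lemma --- is never proved. Your plan (central closure, PI versus non-PI dichotomy, GPI theory) is an outline of a possible proof, not a proof: the assertion that GPI theory ``rules out'' a noncentral $[RC,RC]$-submodule with central commutators is precisely what would need to be established, and your alternative ``reduction of $W$ to a noncentral Lie ideal'' does not exist as stated, because $W$ is only invariant under $[R,R]$ and not under $R$, so Herstein's classical result for Lie ideals ($[U,U] \subseteq Z(R) \Rightarrow U \subseteq Z(R)$) does not apply to $W$ directly. The paper closes exactly this step by citing a result tailored to the mixed situation, Lemma~11 of Lanski--Montgomery \cite{LanMon72LieStrPrimeChar2}: for a non-exceptional prime ring, if $[U,G] \subseteq G$ and $[G,G] \subseteq Z(R)$ with $U \nsubseteq Z(R)$, then $G \subseteq Z(R)$. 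It is applied with $U = [R,R]$ and $G = V^{(n)}$, which additionally requires knowing $[R,R] \nsubseteq Z(R)$; the paper gets this from a short primeness argument (if $[a,R] \subseteq Z(R)$ then $a \in Z(R)$, so $[R,R] \subseteq Z(R)$ would force $R = Z(R)$ and hence $[R,V] = \{0\}$), a step your proposal also omits. In short: right skeleton and right intuition, but the decisive lemma is missing, and the machinery you propose in its place would itself have to be developed, in particular through the delicate characteristic-$2$ analysis that Lanski--Montgomery already carried out.
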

\begin{proof}
Let $Z := Z(R)$ denote the center of $R$. 
Since $R$ is prime, any element $a\in R$ which satisfies $[a,R]\subseteq Z$ will automatically belong to $Z$.
Indeed, the primeness of~$R$ implies that either $Z=\{0\}$ or $Z$ is a domain.
Assuming that $[a,R] \subseteq Z$, and given $x \in R$, we have $a[a,x]=[a,ax] \in Z$ and so $xa[a,x]=a[a,x]x=ax[a,x]$. 
Thus $[a,x][a,x]=0$, implying $[a, x]=0$. So $a\in Z$. 

We claim that $[R,R] \nsubseteq Z$.
Assuming by contradiction that $[R,R] \subseteq Z$, the above observation implies that $R\subseteq Z$, and thus $[R,V]=\{0\}$, which is a contradiction. 
Thus, we have $[R,R] \nsubseteq Z$, as desired.

Since 
\[
V^{(m+n+1)} =\big[ V^{(m+n)}, V^{(m+n)}\big] \subseteq \big[ V^{(m)}, V^{(n)}\big],
\]
it suffices to verify that $V^{(n+1)}$, which equals $\big[V^{(n)}, V^{(n)}\big]$ by definition, is not central for every $n\geq 1$. Using induction
over $n$, we will show that in fact $V^{(n)}\nsubseteq Z$ for every $n\geq 0$. 

The case $n=0$ is true by assumption. Assume that $V^{(n)}\nsubseteq Z$,
and, in order to reach a contradiction, assume that $V^{(n+1)}\subseteq Z$. 
Applying \cite[Lemma~11]{LanMon72LieStrPrimeChar2} for $U = [R,R]$ and $G = V^{(n)}$, we have $[U,G] \subseteq G$ (since $V^{(n)}$ is an $[R,R]$-submodule) and $[G,G]=[V^{(n)},V^{(n)}]=V^{(n+1)}\subseteq Z$.
Since $[R,R] = U \nsubseteq Z$, we deduce that $V^{(n)} = G \subseteq Z$, a contradiction. This finishes the proof.
\end{proof}

%==========================================================================================
In \cite[Theorem~1.2]{Lee22AddSubgpGenNCPoly} it is shown that if $R$ is a ring with $R=2R$ such that every proper ideal is contained in a maximal ideal, and if $L \subseteq R$ is a Lie ideal such that $[R,L]$ is full, then $R = [R,L] + [R,L]^2$ and $[R,R] \subseteq L$.
Next, we show that this result also holds for $[R,R]$-submodules
instead of Lie ideals.
Moreover, we can relax the condition that proper ideals are contained in maximal ideals to containment in prime ideals.
Further, we conclude that $R = [R,L]^2$, that is, the summand $[R,L]$ is not necessary.

%==========================================================================================
\begin{thm}
\label{prp:CharFNoncentralRRSubmod}
Let $R$ be a ring such that every proper ideal is contained in a non-exceptional prime ideal, and such that $[R,R]$ is full.
Let $V \subseteq R$ be an $[R,R]$-submodule.
Then the following are equivalent:
\begin{enumerate}
\item
The subgroup $V$ is fully noncentral.
\item
The subgroup $V^{(n)}$ is full for some (equivalently, every) $n \geq 1$.
\item
The subgroup $[ V^{(m)}, V^{(n)} ]$ is full for some (equivalently all) $m,n \geq 1$
\item
We have $[R,R] \subseteq V$ (and, in particular, $V$ is a Lie ideal).
\end{enumerate}

Moreover, if $V$ is fully noncentral, then $[R,R] = [R,V] = [V,V] = V^{(n)}=R^{(n)}$ for every $n \geq 1$, and $R=V^2$.
\end{thm}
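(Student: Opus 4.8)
The plan is to take condition~(4), that is $[R,R] \subseteq V$, as the hub of the argument, to prove (1)$\Leftrightarrow$(4) first, and to derive the ``moreover'' equalities only afterwards. The two genuinely new inputs are \autoref{prp:CaseRRSubmodule}, which reduces everything to the fullness of $[V^{(2)},V^{(3)}]$, and \autoref{prp:VmVn}, which detects noncentrality of iterated commutators in a non-exceptional prime quotient. Everything else is bookkeeping with the Jacobi identity and repeated appeals to \autoref{prp:LLFull}. I would begin by recording the trivial half of the equivalences: by \autoref{prp:T-RRSubmodule}, for all $m,n\geq 1$ one has $V^{(m)},V^{(n)}\subseteq V^{(1)}$, whence $[V^{(m)},V^{(n)}]\subseteq V^{(2)}\subseteq V^{(1)}=[V,V]\subseteq [R,V]$ and likewise $V^{(n)}\subseteq [R,V]$. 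Thus fullness of any single $V^{(n)}$, or of any single $[V^{(m)},V^{(n)}]$, forces $[R,V]$ to be full, so each of the ``some'' versions of~(2) and~(3) already implies~(1).

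The substantial implication is (1)$\Rightarrow$(4). Here I would show that $[V^{(2)},V^{(3)}]$ is full and then quote \autoref{prp:CaseRRSubmodule}. Suppose instead that $[V^{(2)},V^{(3)}]$ lies in a proper ideal; by hypothesis it then lies in a non-exceptional prime ideal $P$. Passing to $\bar R:=R/P$, the image $\bar V$ is an $[\bar R,\bar R]$-submodule, one checks $\overline{V^{(n)}}=\bar V^{(n)}$ for all $n$ (images commute with forming commutators and additive subgroups), and $[\bar V^{(2)},\bar V^{(3)}]=\{0\}$ is central. The contrapositive of \autoref{prp:VmVn} then yields $[\bar R,\bar V]=\{0\}$, that is $[R,V]\subseteq P$, contradicting the full noncentrality of $V$. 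Hence $[V^{(2)},V^{(3)}]$ is full and $[R,R]\subseteq V$. Conversely, if $[R,R]\subseteq V$ then $[R,V]\subseteq [R,R]$ since $V\subseteq R$, while $[R,V]\supseteq [R,[R,R]]=[R,R]$ by the ``moreover'' clause of \autoref{prp:GenByCommutators}; so $[R,V]=[R,R]$ is full and~(1) holds. This settles (1)$\Leftrightarrow$(4) and, along the way, the equality $[R,V]=[R,R]$.

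Assuming now~(4), the element $V$ is a Lie ideal because $[R,V]=[R,R]\subseteq V$, so \autoref{prp:LLFull} applied to $L=V$ delivers $R=V^2$ and the fullness of $[V,V]=V^{(1)}$. What remains is to upgrade fullness to the \emph{subgroup} equalities $V^{(n)}=[R,R]$, with the case $V=R$ (which is itself a fully noncentral $[R,R]$-submodule, as $[R,R]$ is full) giving $R^{(n)}=[R,R]$. This is the step I expect to be the main obstacle, since fullness is an ideal-theoretic condition whereas the assertion is an equality of additive subgroups. I would resolve it by a bootstrap. First, a Jacobi-identity induction shows each $V^{(n)}$ is again a Lie ideal: for $a\in R$ and $x,y\in V^{(n-1)}$ one has $[a,[x,y]]=[[a,x],y]+[x,[a,y]]$ with $[a,x],[a,y]\in [R,V^{(n-1)}]\subseteq V^{(n-1)}$, so $[a,[x,y]]\in V^{(n)}$. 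Second, the prime-quotient argument of the previous paragraph, run with $\bar V^{(n)}=[\bar V^{(n-1)},\bar V^{(n-1)}]$ in place of $[\bar V^{(2)},\bar V^{(3)}]$ and invoking \autoref{prp:VmVn} with $m=n-1$, shows that every $V^{(n)}$ with $n\geq 2$ is full (the case $n=1$ being already known).

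With these two facts in hand, I would fix $n\geq 1$ and apply \autoref{prp:LLFull} to the Lie ideal $L=V^{(n)}$: its condition~(1), namely fullness of $[V^{(n)},V^{(n)}]=V^{(n+1)}$, has just been verified, so the proposition yields $[R,R]\subseteq V^{(n)}$. Since $V^{(n)}\subseteq V^{(1)}=[V,V]\subseteq [R,R]$ by \autoref{prp:T-RRSubmodule}, the two inclusions combine to $V^{(n)}=[R,R]$ for every $n\geq 1$; in particular $[V,V]=[R,R]$ and $R^{(n)}=[R,R]$. This also produces the ``every'' version of~(2), and the ``all'' version of~(3) follows because $[V^{(m)},V^{(n)}]=[[R,R],[R,R]]$ is full by \autoref{prp:GenByCommutators}. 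Together with the easy implications from the first paragraph and the already-established (1)$\Leftrightarrow$(4), this closes all four equivalences and records the final equalities $[R,R]=[R,V]=[V,V]=V^{(n)}=R^{(n)}$ and $R=V^2$.
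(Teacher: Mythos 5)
Your proposal is correct and takes essentially the same route as the paper: the easy inclusions from \autoref{prp:T-RRSubmodule}, the prime-quotient argument with \autoref{prp:VmVn} followed by \autoref{prp:CaseRRSubmodule} for (1)$\Rightarrow$(4), and the clause $[R,R]=[R,[R,R]]$ of \autoref{prp:GenByCommutators} for (4)$\Rightarrow$(1). The only real difference is in the ``moreover'' part, where you re-run the quotient argument to show each $V^{(n)}$ is a full Lie ideal and then apply \autoref{prp:LLFull} to each $L=V^{(n)}$ separately, whereas the paper gets the subgroup equalities from the single chain $[R,R]=R^{(2)}=[[R,R],[R,R]]\subseteq[V,V]\subseteq[R,V]\subseteq[R,R]$ after establishing $R^{(n)}=[R,R]$; both derivations are valid and rest on the same lemmas.
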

\begin{proof}
We begin by showing that (3) implies (1).
Given $m,n \geq 1$, we can apply \autoref{prp:T-RRSubmodule} as in \autoref{rmk:V2V3-full} to see that
\[
[ V^{(m)}, V^{(n)} ]
\ \subseteq \ [ V^{(1)}, V^{(1)} ]
\ = \ V^{(2)}
\ \subseteq \ V^{(1)}
\ = \ [V,V]
\ \subseteq \ [R,V].
\]
Thus, if $[ V^{(m)}, V^{(n)} ]$ is full for some $m,n \geq 1$, then $V$ is fully noncentral.

Conversely, assume that $[R,V]$ is full, and let $m,n \geq 1$.
Set $I := \widetilde{R}[ V^{(m)}, V^{(n)} ]\widetilde{R}$ and let us verify that $I = R$.
To reach a contradiction, assume that $I \neq R$.
By assumption, we obtain some non-exceptional prime ideal $P \subseteq R$ such that $I \subseteq P$.
Then $R/P$ is a non-exceptional prime ring.
Let $W$ denote the image of $V$ in $R/P$.
Then $W$ is a additive subgroup of~$R/P$ with $[[R/P,R/P],W] \subseteq W$.
Since $[R,V]$ is full, so is $[R/P,W]$.
However, since $I \subseteq P$, we have $[W^{(m)},W^{(n)}]=\{0\}$, which contradicts \autoref{prp:VmVn}.
This shows that $I = R$, as desired.
Thus~(1) and~(3) are equivalent.

Using \autoref{prp:T-RRSubmodule}, one shows that~(2) and~(3) are equivalent.
By \autoref{prp:CaseRRSubmodule}, (3) implies~(4).
Conversely, to see that~(4) implies~(1), assume that $[R,R] \subseteq V$.
Using the last statement in \autoref{prp:GenByCommutators} at the first step, we get
\[
[R,R] 
= [R,[R,R]]
\subseteq [R,V],
\]
and thus $[R,V]$ is full.
Thus, (1)-(4) are equivalent.

\medskip

It remains to show the last statement. We first verify that $[R,R]=R^{(n)}$ for every $n \geq 1$.
By \autoref{prp:GenByCommutators}, we have $[R,[R,R]]=[R,R]$, which shows that $[R,R]$ is fully noncentral.
Given $n \geq 1$, applying the equivalence of~(1) and~(3) for $V=[R,R]$, we get that $[R^{(1)},R^{(n)}]$ is full.
Since $[R^{(1)},R^{(n)}] \subseteq [R,R^{(n)}]$, we deduce that $R^{(n)}$ is fully noncentral, and thus $[R,R] \subseteq R^{(n)}$.
The converse inclusion always holds, which shows that $[R,R]=R^{(n)}$. 

\medskip

Next, assume that $V$ is fully noncentral.
Using~(4), it follows that
\[
[R,R]
= R^{(2)}
= [[R,R],[R,R]]
\subseteq [V,V],
\]
and thus $[R,R]=[R,V]=[V,V]$.
This implies that $[R,R] = R^{(n)} = V^{(n)}$ for all $n \geq 1$, and since $V$ is a Lie ideal we also get $R=L^2$ by \autoref{prp:LLFull}.
\end{proof}

%==========================================================================================
\begin{cor}
\label{prp:TFAE-FNSubgp}
Let $R$ be a ring such that every proper ideal is contained in a non-exceptional prime ideal, and let $V \subseteq R$ be a fully noncentral, additive subgroup.
Then the following are equivalent:
\begin{enumerate}
\item
We have $[R,R] \subseteq V$.
\item
The subgroup $V$ is a Lie ideal.
\item
The subgroup $V$ is an $[R,R]$-submodule.
\end{enumerate}

In particular, if $V$ is a fully noncentral Lie ideal in $[R,R]$, then $V=[R,R]$.
\end{cor}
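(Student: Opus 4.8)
The plan is to reduce everything to \autoref{prp:CharFNoncentralRRSubmod}. The first observation is that the standing hypothesis of that proposition---namely that $[R,R]$ is full---holds automatically here, even though it is not repeated in the statement of the corollary. Indeed, since $V$ is fully noncentral, $[R,V]$ is full, and because $[R,V] \subseteq [R,R]$ this forces $[R,R]$ to be full as well. Hence \autoref{prp:CharFNoncentralRRSubmod} is applicable to $V$ whenever $V$ is an $[R,R]$-submodule.

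I would then establish the three equivalences as a short cycle $(1)\Rightarrow(2)\Rightarrow(3)\Rightarrow(1)$. For $(1)\Rightarrow(2)$, if $[R,R]\subseteq V$ then $[R,V] \subseteq [R,R] \subseteq V$, so $V$ is a Lie ideal. The implication $(2)\Rightarrow(3)$ is immediate, since any Lie ideal $V$ satisfies $[[R,R],V] \subseteq [R,V] \subseteq V$ and is therefore an $[R,R]$-submodule. For the remaining implication $(3)\Rightarrow(1)$, I would invoke the equivalence of conditions~(1) and~(4) in \autoref{prp:CharFNoncentralRRSubmod}: the $[R,R]$-submodule $V$ is fully noncentral by hypothesis, so that proposition yields $[R,R] \subseteq V$.

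For the concluding statement, I would note that a fully noncentral Lie ideal $V$ in $[R,R]$ is in particular an additive subgroup with $V \subseteq [R,R]$ and $[[R,R],V] \subseteq V$, that is, an $[R,R]$-submodule. Applying the already-established implication $(3)\Rightarrow(1)$ gives $[R,R] \subseteq V$, and combining this with $V \subseteq [R,R]$ yields $V = [R,R]$.

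I do not expect any serious obstacle: essentially all of the content is packaged into \autoref{prp:CharFNoncentralRRSubmod}, and the three equivalences are formal once it is available. The only point requiring a moment's care is the opening remark that full noncentrality of $V$ already guarantees that $[R,R]$ is full, so that the hypotheses of \autoref{prp:CharFNoncentralRRSubmod} are genuinely met.
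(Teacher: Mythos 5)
Your proposal is correct and follows essentially the same route as the paper: both reduce the key implication (3)$\Rightarrow$(1) to \autoref{prp:CharFNoncentralRRSubmod} after observing that $[R,V]\subseteq[R,R]$ forces $[R,R]$ to be full, with the remaining implications being formal. Your write-up merely spells out the steps the paper labels as ``clear,'' so there is nothing further to add.
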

\begin{proof}
It is clear that~(1) implies~(2), and that~(2) implies~(3).
To see that~(3) implies~(1), we note that $[R,R]$ is full (since $[R,V]$ is full), and thus the fact that~(1) implies~(4) in \autoref{prp:CharFNoncentralRRSubmod} gives the result.
\end{proof}

%==========================================================================================
\begin{rmk}
In the setting of \autoref{prp:CharFNoncentralRRSubmod}, it is not clear that $R=V^2$ implies that~$V$ is fully noncentral.
This should be contrasted with \autoref{prp:LLFull}, where under fewer assumptions on the ring $R$ it was shown that a Lie ideal $L$ is fully noncentral whenever $R=L^2$.
We do not know if a similar result holds in the context of \autoref{prp:CharFNoncentralRRSubmod}, since it is unclear if an $[R,R]$-submodule $V$ satisfying $R=V^2$ is a Lie ideal.
\end{rmk}

%==========================================================================================
\begin{cor}
\label{prp:GenByCommutatorsNonexceptional}
Let $R$ be a ring such that every proper ideal is contained in a non-exceptional prime ideal.
Then the following are equivalent:
\begin{enumerate}
\item
The subgroup $[R,R]$ is full. 
\item
The subgroup $R^{(n)}$ is full for some (equivalently, every) $n \geq 1$.
\item
We have $R = [R,R]^2$.
\end{enumerate}
Moreover, if this is the case, then $[R,R] = [R^{(m)},R^{(n)}]$ for all $m,n \geq 0$, and in particular $[R,R] = R^{(n)}$ for all $n \geq 1$.
\end{cor}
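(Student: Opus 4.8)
The plan mirrors the proof of \autoref{prp:GenByCommutators}, replacing the appeal to \autoref{prp:LLFull} by the stronger \autoref{prp:CharFNoncentralRRSubmod}: I would specialize the latter to the $[R,R]$-submodule $V := [R,R]$. Two features make this specialization decisive. First, $[R,R]$ is automatically an $[R,R]$-submodule, since $[[R,R],[R,R]] \subseteq [R,R]$. Second, condition~(4) of \autoref{prp:CharFNoncentralRRSubmod} reads $[R,R] \subseteq V$, which for $V = [R,R]$ is a tautology. Hence, once the standing hypothesis that $[R,R]$ is full is in force, \emph{all} conclusions of that proposition are unlocked; in particular $V = [R,R]$ is fully noncentral and its ``moreover'' clause applies.

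The first task is therefore to show that each of~(1), (2), (3) forces $[R,R]$ to be full, so that the proposition may legitimately be invoked. For~(2) I would use that $R^{(n)} \subseteq [R,R]$ for every $n \geq 1$ (any iterated commutator is itself a commutator, hence lies in $[R,R]$), whence fullness of some $R^{(n)}$ passes up to $[R,R]$. For~(3) I would use that $[R,R]^2$ lies in the ideal generated by $[R,R]$, so that $R = [R,R]^2$ already makes $[R,R]$ full; and~(1) is fullness of $[R,R]$ by definition. With $[R,R]$ now known to be full, applying \autoref{prp:CharFNoncentralRRSubmod} to $V = [R,R]$ and reading off its ``moreover'' clause gives $R^{(n)} = V^{(n)} = [R,R]$ for all $n \geq 1$ and $R = V^2 = [R,R]^2$. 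The former shows that every $R^{(n)}$ equals the full group $[R,R]$, proving $(1)\Rightarrow(2)$ and simultaneously the ``some versus every'' assertion in~(2); the latter is $(1)\Rightarrow(3)$. Together with the reverse implications just established, this closes the cycle of equivalences.

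For the final identity $[R,R] = [R^{(m)},R^{(n)}]$ with $m,n \geq 0$, I would split into cases using $R^{(k)} = [R,R]$ for $k \geq 1$ and $R^{(0)} = R$, together with the equalities $[V,V] = [R,V] = [R,R]$ supplied by the ``moreover'' of \autoref{prp:CharFNoncentralRRSubmod}. When both indices are $\geq 1$ the claim reads $[[R,R],[R,R]] = R^{(2)} = [R,R]$; when exactly one index is $0$ it reduces (using $[A,B] = [B,A]$ as additive subgroups) to $[R,[R,R]] = [R,V] = [R,R]$; and the case $m = n = 0$ is the definition of $[R,R]$. I expect no genuine obstacle, since all the substance is carried by \autoref{prp:CharFNoncentralRRSubmod}. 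The only points requiring care are the index bookkeeping in the derived series — for $V = [R,R]$ one has $V^{(n)} = R^{(n+1)}$, so that the proposition's identity $V^{(n)} = R^{(n)}$ encodes $R^{(n+1)} = R^{(n)}$ — and the discipline of not invoking the proposition before $[R,R]$ has been shown to be full.
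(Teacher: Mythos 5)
Your proof is correct and takes essentially the same route as the paper: both hinge on applying \autoref{prp:CharFNoncentralRRSubmod} to $V=[R,R]$ (whose condition~(4) is then tautological), after first observing that each of the three conditions forces $[R,R]$ to be full so that the theorem's standing hypothesis is met. The only cosmetic differences are that the paper delegates the equivalence of~(1) and~(3) to \autoref{prp:GenByCommutators} whereas you extract $R=[R,R]^2$ directly from the theorem's ``moreover'' clause, and that the paper proves $[R,R]=[R^{(m)},R^{(n)}]$ by assuming $m\leq n$ and writing $[R,R]=R^{(n+1)}=[R^{(n)},R^{(n)}]\subseteq[R^{(m)},R^{(n)}]$ while you split into cases; both are sound.
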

\begin{proof}
It is clear that~(2) implies~(1), since $R^{(n)}\subseteq [R,R]$.
The equivalence of~(1) and~(3) was shown in \autoref{prp:GenByCommutators}.
By applying \autoref{prp:CharFNoncentralRRSubmod} with $V=[R,R]$, it follows that $[R,R]=R^{(n)}$ for all $n \geq 1$.
This shows that~(1)-(3) are equivalent.

Assume that $[R,R]$ is full.
As noted above, we then have $[R,R]=R^{(n)}$ for all $n \geq 1$.
Given $m,n \geq 0$, without loss of generality assume that $m \leq n$.
Then
\[
[R,R] 
= R^{(n+1)}
= [R^{(n)},R^{(n)}]
\subseteq [R^{(m)},R^{(n)}].
\]
The converse inclusion is clear.
\end{proof}

%==========================================================================================
We also obtain the following result for pairs of Lie ideals, which in the setting of \ca{s} was obtained in \cite[Theorem~3.6]{GarThi24PrimeIdealsCAlg}.

%==========================================================================================
\begin{thm}
\label{prp:TwoLieIdls}
Let $R$ be a ring such that every proper ideal is contained in a non-exceptional prime ideal, and let $K,L \subseteq R$ be Lie ideals.
Then the following are equivalent:
\begin{enumerate}
\item
The Lie ideal $[K,L]$ is full.
\item
The Lie ideals $[K,K]$ and $[L,L]$ are full.
\item
The Lie ideals $[R,K]$ and $[R,L]$ are full.
\end{enumerate}
Moreover, if this is the case, then $[R,R] = [K,L] = [K,K]=[L,L]$.
\end{thm}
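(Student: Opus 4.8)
The plan is to deduce \autoref{prp:TwoLieIdls} from the single-Lie-ideal result \autoref{prp:LLFull} by reducing statements about the pair $(K,L)$ to statements about individual Lie ideals, using the primeness assumption to control how fullness behaves across the product $[K,L]$. First I would establish the chain of implications (1)$\Rightarrow$(2)$\Rightarrow$(3), then close the loop with (3)$\Rightarrow$(1), and finally extract the ``moreover'' equalities. The key structural fact I want to exploit is that $[K,L] \subseteq [K,K]$ fails in general, so the implication (1)$\Rightarrow$(2) cannot be purely set-theoretic; instead I expect to argue by passing to quotients. Specifically, to see that (1) implies each of $[K,K]$ and $[L,L]$ is full, I would suppose (say) that the ideal $J$ generated by $[K,K]$ is proper, take a non-exceptional prime ideal $P \supseteq J$, and work in the prime ring $R/P$. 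There the image $\overline{K}$ of $K$ is a Lie ideal with $[\overline{K},\overline{K}]=\{0\}$; by a standard commuting-Lie-ideal argument in prime rings (of the type used in \autoref{prp:VmVn}, via \cite[Lemma~11]{LanMon72LieStrPrimeChar2}), a Lie ideal with trivial self-commutator in a non-exceptional prime ring must be central, forcing $\overline{K} \subseteq Z(R/P)$ and hence $[\overline{R},\overline{K}]=\{0\}$. This makes $[\overline{K},\overline{L}]=\{0\}$, contradicting that $[K,L]$ is full. The same argument with the roles reversed handles $[L,L]$.

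For (2)$\Rightarrow$(3): since $[K,K]$ is full, \autoref{prp:LLFull} applied to the Lie ideal $K$ gives $[R,R]=[R,K]$, so in particular $[R,K]$ is full; symmetrically $[R,L]$ is full. For (3)$\Rightarrow$(1), I would again go to a prime quotient. Suppose the ideal generated by $[K,L]$ is proper and pick a non-exceptional prime $P$ containing it, so that $[\overline{K},\overline{L}]=\{0\}$ in $R/P$. In a prime ring two Lie ideals whose mutual commutator vanishes should force one of them to be central --- this is the pair-version of the commuting-Lie-ideal lemma, and I would invoke \cite[Lemma~11]{LanMon72LieStrPrimeChar2} (or the argument embedded in \autoref{prp:VmVn}) to conclude either $\overline{K}\subseteq Z(R/P)$ or $\overline{L}\subseteq Z(R/P)$. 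Either way one of $[\overline{R},\overline{K}]$, $[\overline{R},\overline{L}]$ vanishes, contradicting fullness from (3). Hence $[K,L]$ is full.

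For the final equalities, assume the equivalent conditions hold. From (2) and \autoref{prp:LLFull} applied separately to $K$ and to $L$ we get $[R,R]\subseteq K$, $[R,R]\subseteq L$, and $[R,R]=[R,K]=[R,L]$; moreover $[K,K]$ and $[L,L]$ are each full Lie ideals containing $[R,R]$, so by the same proposition $[R,R]=[K,K]=[L,L]$. Finally, $[K,L]\supseteq [[R,R],[R,R]]=R^{(2)}=[R,R]$ by \autoref{prp:CharFNoncentralRRSubmod} (or \autoref{prp:GenByCommutators}), and conversely $[K,L]\subseteq[R,R]$ since $K,L\subseteq R$; hence $[R,R]=[K,L]=[K,K]=[L,L]$.

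The main obstacle I anticipate is the prime-ring input for the implications (1)$\Rightarrow$(2) and (3)$\Rightarrow$(1): I need the precise statement that in a \emph{non-exceptional} prime ring, if $K,L$ are Lie ideals with $[K,L]=\{0\}$ then one of them is central. The non-exceptionality is essential here (Examples~\ref{exa:ExceptionalMatrixAlg} and~\ref{exa:ExceptionalDivAlg} show it can fail otherwise), and getting the exact hypotheses of \cite[Lemma~11]{LanMon72LieStrPrimeChar2} to line up --- in particular verifying that $[R,R]$ plays the role of the noncentral Lie ideal $U$ needed to force centrality of the other factor --- is where the care is required, exactly as in the inductive step of \autoref{prp:VmVn}.
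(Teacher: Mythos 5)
Most of your proposal is sound. The implication (1)$\Rightarrow$(2) works as you describe: in $R/P$ the image $\overline{K}$ is a Lie ideal with $[\overline{K},\overline{K}]=\{0\}$, and \cite[Lemma~11]{LanMon72LieStrPrimeChar2} genuinely applies with $U=[\overline{R},\overline{R}]$ and $G=\overline{K}$, since its hypothesis that $[G,G]$ be central holds trivially (and if instead $[\overline{R},\overline{R}]\subseteq Z(R/P)$, then $R/P$ is commutative by the observation opening the proof of \autoref{prp:VmVn}, and the contradiction with fullness of $[K,L]$ is immediate). The implication (2)$\Rightarrow$(3) is even easier than you make it: $[K,K]\subseteq[R,K]$ and $[L,L]\subseteq[R,L]$, so no appeal to \autoref{prp:LLFull} is needed. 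Your derivation of the ``moreover'' equalities is also essentially fine.

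The genuine gap is in (3)$\Rightarrow$(1). The pair statement you need --- in a non-exceptional prime ring, Lie ideals with $[\overline{K},\overline{L}]=\{0\}$ force $\overline{K}$ or $\overline{L}$ to be central --- is \emph{not} delivered by \cite[Lemma~11]{LanMon72LieStrPrimeChar2}, nor by the argument of \autoref{prp:VmVn}. As that lemma is used in this paper, it requires as a hypothesis that the self-commutator $[G,G]$ of the subgroup $G$ being forced into the center is itself central; in your situation you know only that the \emph{mutual} commutator $[\overline{K},\overline{L}]$ vanishes, and you have no control over $[\overline{K},\overline{K}]$ or $[\overline{L},\overline{L}]$, so neither assignment ($U=\overline{K}$, $G=\overline{L}$, or the reverse) satisfies the hypotheses. \autoref{prp:VmVn} concerns the derived series of a single $[R,R]$-submodule and likewise says nothing about a pair. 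The fact you want is true, but it is really the classical centralizer lemma (the centralizer of a noncentral Lie ideal in a non-exceptional prime ring equals the center), which is neither proved nor cited anywhere in this paper; so, as written, this step is unproven. This is exactly the obstacle you flagged, and it does not resolve itself.

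The paper avoids prime quotients in this proof altogether, and that is the easy repair: assume (3), so $K$ and $L$ are fully noncentral (and $[R,R]$ is full, since $[R,K]\subseteq[R,R]$); then \autoref{prp:CharFNoncentralRRSubmod} gives $[R,R]\subseteq K$ and $[R,R]\subseteq L$, and \autoref{prp:GenByCommutatorsNonexceptional} gives $[R,R]=[[R,R],[R,R]]$, whence $[R,R]=[[R,R],[R,R]]\subseteq[K,L]\subseteq[R,R]$. Thus $[K,L]=[R,R]$ is full, which is (1); the same computation gives $[K,K]=[L,L]=[R,R]$, hence (2) and the ``moreover'' clause; and the remaining implications (1)$\Rightarrow$(3) and (2)$\Rightarrow$(3) are the trivial containments $[K,L]\subseteq[R,K]\cap[R,L]$ and $[K,K]\subseteq[R,K]$, $[L,L]\subseteq[R,L]$. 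In short: all the prime-quotient work is already packaged in \autoref{prp:CharFNoncentralRRSubmod}, and re-doing it for a pair of Lie ideals requires an extra classical ingredient your proposal does not supply.
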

\begin{proof}
It is clear that~(1) implies~(3) and that~(2) implies~(3).
To show that~(3) implies~(1) and~(2), assume that $K$ and $L$ are fully noncentral.
By \autoref{prp:CharFNoncentralRRSubmod}, we get $[R,R] \subseteq K$ and $[R,R] \subseteq L$.
Using that $R^{(2)}=R^{(3)}$ at the first step (which is true by \autoref{prp:GenByCommutatorsNonexceptional}), we get
\[
[R,R]
= [[R,R],[R,R]]
\subseteq [K,L],
\]
and thus $[R,R]=[K,L]$.
Similarly, we get $[R,R]=[K,K]=[L,L]$.
This shows that $[K,L]$, $[K,K]$ and $[L,L]$ are full, and equal 
to each other.
\end{proof}

%==========================================================================================
The results of this section are applicable in many concrete cases of interest.
We point out the following special case, which seems to not have appeared in the literature:

%==========================================================================================
\begin{cor}
Let $R$ be a unital algebra over a field of characteristic $\neq 2$, and let $V \subseteq R$ be a fully noncentral $[R,R]$-submodule.
Then $R = V^2$ and $[R,R] \subseteq V$.
In particular, $V$ is a Lie ideal.
\end{cor}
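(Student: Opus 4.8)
The plan is to obtain this statement as an immediate application of \autoref{prp:CharFNoncentralRRSubmod}. That theorem already gives, for an $[R,R]$-submodule $V$, the equivalence between full noncentrality and the inclusion $[R,R] \subseteq V$, together with $R = V^2$ in the concluding ``moreover'' clause. So the entire task reduces to checking that the two standing hypotheses of \autoref{prp:CharFNoncentralRRSubmod} hold for our unital algebra $R$: that every proper ideal is contained in a non-exceptional prime ideal, and that $[R,R]$ is full.

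First I would verify the hypothesis on prime ideals. Since $R$ is unital, a Zorn's lemma argument places every proper ideal inside a maximal ideal; and being unital, $R$ is idempotent, so \autoref{prp:MaxVsPrime}(2) shows every maximal ideal is prime. Hence every proper ideal is contained in a prime ideal. To see these are non-exceptional, I would invoke the characteristic assumption: if $P \subsetneq R$ is proper, then $R/P$ is a nonzero algebra over a field $\mathbb{F}$ with $\charact \mathbb{F} \neq 2$, so $2$ is invertible in $\mathbb{F}$ and therefore $2\bar{x} \neq 0$ for every nonzero $\bar{x} \in R/P$. Thus $R/P$ does not have characteristic $2$, and so it is non-exceptional in the sense of \autoref{dfn:Exceptional}. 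Next I would check that $[R,R]$ is full. By hypothesis $V$ is fully noncentral, i.e.\ $[R,V]$ is full (\autoref{dfn:FullyNoncentral}); since $[R,V] \subseteq [R,R]$, any proper ideal containing $[R,R]$ would also contain $[R,V]$, contradicting its fullness. Hence $[R,R]$ is full as well.

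With both hypotheses confirmed, I would apply \autoref{prp:CharFNoncentralRRSubmod} to the fully noncentral $[R,R]$-submodule $V$: the equivalence of conditions~(1) and~(4) yields $[R,R] \subseteq V$, and the ``moreover'' clause supplies $R = V^2$. Finally, from $[R,R] \subseteq V$ I get $[R,V] \subseteq [R,R] \subseteq V$, so $V$ is a Lie ideal. I do not expect a genuine obstacle, since the corollary is essentially a specialization of the preceding theorem; the only point requiring care is the verification that the prime ideals produced above are non-exceptional, which is exactly where the characteristic $\neq 2$ assumption enters.
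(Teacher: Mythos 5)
Your proposal is correct and follows essentially the same route as the paper: verify that every proper ideal of the unital algebra $R$ lies in a non-exceptional prime ideal and that $[R,R]$ is full, then apply \autoref{prp:CharFNoncentralRRSubmod}. You are in fact slightly more careful than the paper's own proof, which leaves implicit both the fullness of $[R,R]$ (deduced, as you do, from $[R,V]\subseteq[R,R]$) and the appeal to \autoref{prp:MaxVsPrime} for primeness of maximal ideals.
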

\begin{proof}
Since $R$ is unital, every proper ideal is contained in a proper (and maximal) ideal.
Further, since $R$ is an algebra over a field of characteristic $\neq 2$, no prime quotient of $R$ is exceptional.
The result then follows from \autoref{prp:CharFNoncentralRRSubmod}.
\end{proof}

%==========================================================================================
%==========================================================================================
\section{Fully noncentral, invariant subspaces}
\label{sec:FNoncentralSubspaces}

%==========================================================================================
Given a fully noncentral subspace $V$ in an algebra $A$ over a field $\mathbb{F}$, we study the connection between the invariance of $V$ under inner automorphisms of $A$ and the Lie ideal property for $V$. We do this in two different settings:
we assume that every commutator is a sum of either square-zero 
elements (\autoref{prp:TFAE-FNSubspSqZero}), or of nilpotent
elements (\autoref{prp:TFAE-FNSubspNilpot}).

As a first step, we connect invariance of $V$ under inner automorphism induced by square-zero elements to the condition $[x,V] \subseteq V$ for $x^2=0$; 
see \autoref{prp:InvSqZero}. For nilpotent elements instead of
square-zero elements, this is done in \autoref{prp:InvNilpot}.
We then combine these lemmas with 
the results from \autoref{sec:CofinalNonexcPrimeIdls} to obtain
the main results \autoref{prp:TFAE-FNSubspSqZero}
and \autoref{prp:TFAE-FNSubspNilpot}.

The condition that every commutator is a sum of square-zero elements is automatically satisfied in a number of cases of 
interest, for example for the class of zero-product balanced algebras introduced in \cite{GarThi23ZeroProdBalanced};
see \autoref{prp:Balanced}.

\medskip

%==========================================================================================
Given an algebra $A$, we say that an automorphism $\alpha \colon A \to A$ is inner if there exists an invertible element $v \in \widetilde{A}$ such that $\alpha(a) = vav^{-1}$ for all $a \in A$.
We actually only consider automorphisms of the form $a \mapsto (1+x+x^2+\ldots+x^{k-1})a(1-x)$ induced by a nilpotent element $x \in A$ with $x^k = 0$.

We set $N_2(A) := \{ x \in A : x^2=0 \}$, the set of square-zero elements in $A$.
We let $N_2(A)^+$ denote the subspace of $A$ generated by the square-zero elements. We will say that a subset $V\subseteq A$ is
$N_2(A)$-\emph{invariant}, if it is invariant under all inner
automorphisms induced by elements from $N_2(A)$. Equivalently, 
if $a\in V$ and $x^2=0$ imply $(1+x)a(1-x)\in V$.

%==========================================================================================
The following result is probably known, but we could not locate it in the literature.

%==========================================================================================
\begin{lma}
\label{prp:InvSqZero}
Let $A$ be an algebra over a field $\mathbb{F}$, and let $V \subseteq A$ be a subspace.
Then the following statements hold:
\begin{enumerate}
\item
If $\mathbb{F} \neq \{0,1\}$ and $V$ is $N_2(A)$-invariant, then $[N_2(A)^+,V] \subseteq V$.
\item
If $\charact(\mathbb{F}) \neq 2$ and $[N_2(V)^+,V] \subseteq V$, then $V$ is $N_2(A)$-invariant.
\end{enumerate}
%$\neq\{0,1\}$, and let $V \subseteq A$ be a subspace that is invariant under all inner automorphisms of~$A$.
%Then $[x,V] \subseteq V$ for every square-zero element $x \in A$.
\end{lma}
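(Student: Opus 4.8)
The plan is to reduce both implications to their single-element form and then exploit the explicit shape of the conjugation automorphism. Since $N_2(A)^+$ is the span of the square-zero elements and the commutator is additive (indeed $\mathbb{F}$-linear) in its first slot, the inclusion $[N_2(A)^+,V]\subseteq V$ is equivalent to the statement that $[x,a]\in V$ for every $x\in N_2(A)$ and every $a\in V$. The computational engine for both parts is the expansion, valid whenever $x^2=0$ and $\lambda\in\mathbb{F}$,
\[
(1+\lambda x)\,a\,(1-\lambda x) = a + \lambda[x,a] - \lambda^2\, xax ,
\]
where I use that $\lambda x$ is again square-zero and that $1+\lambda x$ is invertible with inverse $1-\lambda x$.

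For part~(1), fix $x\in N_2(A)$ and $a\in V$. Applying $N_2(A)$-invariance to the square-zero element $\lambda x$, the right-hand side above lies in $V$, and subtracting $a\in V$ leaves $\lambda[x,a]-\lambda^2 xax\in V$ for \emph{every} $\lambda\in\mathbb{F}$. The key step is to isolate $[x,a]$ from this one-parameter family. Because $\mathbb{F}\neq\{0,1\}$, there exist two distinct nonzero scalars $\lambda_1,\lambda_2$, and forming $\lambda_2^2$ times the relation at $\lambda_1$ minus $\lambda_1^2$ times the relation at $\lambda_2$ makes the $xax$-terms cancel, collapsing everything to $\lambda_1\lambda_2(\lambda_2-\lambda_1)[x,a]\in V$. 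Since that scalar coefficient is nonzero, dividing by it yields $[x,a]\in V$; this is a Vandermonde-type elimination, and the hypothesis $\mathbb{F}\neq\{0,1\}$ enters precisely to supply the two distinct nonzero scalars it requires.

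For part~(2), I would argue the converse: assuming $[x,a]\in V$ for every square-zero $x$ and every $a\in V$, I take the $\lambda=1$ instance of the expansion and aim to show $(1+x)a(1-x)=a+[x,a]-xax$ lies in $V$. As $a$ and $[x,a]$ are already in $V$, the whole problem reduces to proving $xax\in V$, and this is the main obstacle. The resolution is the observation that the obstruction term is itself an iterated commutator of $x$ against $V$: the identity $[x,[x,a]]=-2\,xax$, which holds because $x^2=0$, lets me apply the hypothesis twice — first $[x,a]\in V$, then $[x,[x,a]]\in V$, as $x$ is square-zero at both stages — to conclude $-2\,xax\in V$. Since $\charact(\mathbb{F})\neq2$, dividing by $2$ gives $xax\in V$, and substituting back shows $(1+x)a(1-x)\in V$, which is exactly $N_2(A)$-invariance.
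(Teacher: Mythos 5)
Your proof is correct and follows essentially the same route as the paper: in (1) the same expansion $(1+\lambda x)a(1-\lambda x)=a+\lambda[x,a]-\lambda^2 xax$ followed by scalar elimination (the paper simply fixes $\lambda_1=1$ and one $\lambda\neq 0,1$, which is your Vandermonde step in less symmetric form), and in (2) the identical identity $[x,[x,a]]=-2xax$ combined with $\charact(\mathbb{F})\neq 2$.
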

\begin{proof}
(1)
Let $a \in V$ and $x \in N_2(A)$.
We need to verify $[x,a] \in V$.

We have $(1+x)V(1-x) \subseteq V$, and thus
\[
[x,a] - xax
= (1+x)a(1-x) - a \in V.
\]

Since $\mathbb{F}$ contains at least three elements, we can choose $\lambda \in F$ with $\lambda \neq 0,1$.
Then $\lambda x$ is also a square-zero element, and applying the above to $\lambda x$ instead of $x$ we obtain
\[
\lambda [x,a] - \lambda^2 xax
= (1+\lambda x)a(1-\lambda x) - a \in V.
\]

Since $V$ is a subspace, we obtain that $\lambda^2([x,a] - xax) \in V$, and thus
\[
(\lambda^2-\lambda)[x,a]
= \lambda^2([x,a] - xax) - \lambda [x,a] + \lambda^2 xax 
\]
also belongs to $V$.
Since $(\lambda^2-\lambda)\neq 0$, and using again that $V$ is a subspace, we get $[x,a] \in V$.

(2)
Let $a \in V$ and $x \in N_2(A)$.
We need to verify $(1+x)a(1-x) \in V$.
We have
\[
(1+x)a(1-x)
= a + [x,a] - xax.
\]
Since $[x,a] \in V$, it suffices to verify that $xax \in V$.
To see this, note that
\[
-2xax = [x,[x,a]]
\]
belongs to $V$, since $[x,a]\in V$.
Since $\charact(\mathbb{F}) \neq 2$ and since $V$ is a subspace, we get $xax \in V$.
\end{proof}

%==========================================================================================
\begin{thm}
\label{prp:TFAE-FNSubspSqZero}
Let $A$ be an algebra over a field $\neq \{0,1\}$ such that every commutator in $A$ is a sum of square-zero elements, and every proper ideal is contained in a non-exceptional prime ideal.
Let $V \subseteq A$ be a fully noncentral subspace.
Then the following are equivalent:
\begin{enumerate}
\item
The subspace $V$ is invariant under all inner automorphisms of~$A$.
\item
The subspace $V$ is invariant under all inner automorphisms induced by square-zero elements of~$A$.
\item
We have $[A,A] \subseteq V$.
\item
The subspace $V$ is a Lie ideal.
\item
The subspace $V$ is an $[A,A]$-submodule.
\end{enumerate}
\end{thm}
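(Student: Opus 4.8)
The plan is to reduce the fivefold equivalence to results already in hand. The key observation is that the equivalence of~(3), (4) and~(5) is immediate from \autoref{prp:TFAE-FNSubgp}: the hypotheses provide exactly a fully noncentral additive subgroup $V$ in a ring whose proper ideals are contained in non-exceptional prime ideals, and that corollary asserts that $[A,A] \subseteq V$, the Lie ideal property, and the $[A,A]$-submodule property are mutually equivalent. (Here one simply regards the $\mathbb{F}$-subspace $V$ as an additive subgroup; fullness of $[A,V]$ is a ring-ideal condition and is insensitive to this.) It therefore remains only to splice conditions~(1) and~(2) into this block, which I would do by establishing the cycle $(1) \Rightarrow (2) \Rightarrow (5)$ together with $(3) \Rightarrow (1)$.

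Two of these links are short. The implication $(1) \Rightarrow (2)$ is trivial, since the inner automorphisms induced by square-zero elements form a subclass of all inner automorphisms. For $(3) \Rightarrow (1)$ I would reuse the elementary identity recorded in the introduction: for invertible $u \in \widetilde{A}$ and $x \in V$,
\[
u x u^{-1} = [u, x u^{-1}] + x \in [A,A] + V \subseteq V,
\]
using $[A,A] \subseteq V$, so that $V$ is invariant under every inner automorphism.

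The one substantive step is $(2) \Rightarrow (5)$, and it is here that the two distinctive hypotheses enter. Assuming $V$ is $N_2(A)$-invariant, I would apply \autoref{prp:InvSqZero}(1)---whose standing assumption $\mathbb{F} \neq \{0,1\}$ is exactly what the theorem grants---to obtain $[N_2(A)^+, V] \subseteq V$. The hypothesis that every commutator is a sum of square-zero elements says precisely that $[A,A] \subseteq N_2(A)^+$, and combining the two inclusions yields
\[
\big[ [A,A], V \big] \subseteq \big[ N_2(A)^+, V \big] \subseteq V,
\]
which is the definition of $V$ being an $[A,A]$-submodule, namely~(5). Since $(5) \Leftrightarrow (3)$ is already known, this closes the cycle $(1) \Rightarrow (2) \Rightarrow (5) \Rightarrow (3) \Rightarrow (1)$, and all five conditions coincide.

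I expect no genuine obstacle beyond the bookkeeping of verifying that \autoref{prp:InvSqZero}(1) alone suffices: the cycle never invokes part~(2) of that lemma, so no assumption on $\charact(\mathbb{F})$ is needed beyond $\mathbb{F} \neq \{0,1\}$. This is what allows the theorem to permit characteristic~$2$, where it is instead the non-exceptional prime hypothesis---entering solely through \autoref{prp:TFAE-FNSubgp}---that rules out the pathological behaviour exhibited by Examples~\ref{exa:ExceptionalMatrixAlg} and~\ref{exa:ExceptionalDivAlg}.
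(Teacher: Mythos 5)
Your proposal is correct and follows essentially the same route as the paper: the equivalence of (3)--(5) via \autoref{prp:TFAE-FNSubgp}, the trivial implication (1)$\Rightarrow$(2), the implication (2)$\Rightarrow$(5) via \autoref{prp:InvSqZero}(1) combined with $[A,A] \subseteq N_2(A)^+$, and (3)$\Rightarrow$(1) via the identity $uxu^{-1} = [u,xu^{-1}] + x$. Your closing remark---that only part~(1) of \autoref{prp:InvSqZero} is used, so no restriction on $\charact(\mathbb{F})$ beyond $\mathbb{F} \neq \{0,1\}$ is needed---matches the paper's setup exactly.
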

\begin{proof}
It is clear that~(1) implies~(2).
By \autoref{prp:TFAE-FNSubgp}, (3)-(5) are equivalent.
Let us show that~(2) implies~(5).
By \autoref{prp:InvSqZero} we have $[N_2(A)^+,V] \subseteq V$, 
and since $[A,A] \subseteq N_2(A)^+$ by assumption, we get $[[A,A],V] \subseteq V$.

Finally, to show that~(3) implies~(1), let $u$ be an invertible element in $\widetilde{A}$, and let $x \in V$.
We have $[\widetilde{A},\widetilde{A}]=[A,A] \subseteq V$, and therefore
\[
uxu^{-1}
= [u,xu^{-1}] + x
\in [\widetilde{A},\widetilde{A}] + V 
\subseteq V.
\]
This shows that $uVu^{-1} \subseteq V$, as desired.
\end{proof}

%==========================================================================================
For an algebra $A$, we write $N(A)$ for the set of its nilpotent
elements, and $N(A)^+$ for the subspace it generates.
The next result is a variation of \autoref{prp:InvSqZero},
using nilpotent elements instead of square-zero elements.

%==========================================================================================
\begin{lma}
\label{prp:InvNilpot}
Let $A$ be an algebra over a field $\mathbb{F}$, and let $V \subseteq A$ be a subspace.
Then the following statements hold:
\begin{enumerate}
\item
If $\mathbb{F}$ is infinite and $V$ is $N(A)$-invariant, then $[N(A)^+,V] \subseteq V$.
\item
If $\charact(\mathbb{F}) = 0$ and $[N(A)^+,V] \subseteq V$, then $V$ is $N(A)$-invariant.
\end{enumerate}
\end{lma}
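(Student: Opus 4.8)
The plan is to mirror the proof of \autoref{prp:InvSqZero}, upgrading the two-value scaling trick used there to a full polynomial (Vandermonde) argument for part~(1), and replacing the identity $-2xax=[x,[x,a]]$ by the exponential--logarithm dictionary relating conjugation to $\operatorname{ad}$ for part~(2).

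For part~(1), I would fix $a\in V$ and a nilpotent $x$ with $x^k=0$. For every scalar $t\in\mathbb{F}$ the element $tx$ is again nilpotent, so $N(A)$-invariance gives
\[
\alpha_{tx}(a)=(1-tx)^{-1}a(1-tx)=\Big(\sum_{i=0}^{k-1}t^i x^i\Big)a(1-tx)\in V.
\]
Hence $p(t):=\alpha_{tx}(a)-a$ lies in $V$ for all $t$. Expanding shows that $p$ is a polynomial in~$t$ of degree at most~$k$ with coefficients in~$A$, whose linear coefficient equals $[x,a]$. Since $\mathbb{F}$ is infinite, I would pick $k+1$ distinct scalars $t_0,\ldots,t_k$ and invert the Vandermonde matrix $(t_i^j)$ over $\mathbb{F}$ to express each coefficient of $p$ as an $\mathbb{F}$-linear combination of the values $p(t_i)\in V$; as $V$ is a subspace, every coefficient of $p$ lies in $V$, in particular $[x,a]\in V$. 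Bilinearity of the bracket, together with the fact that $N(A)^+$ is spanned by $N(A)$, then yields $[N(A)^+,V]\subseteq V$.

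For part~(2), I would assume $\charact(\mathbb{F})=0$ and $[N(A)^+,V]\subseteq V$, and fix $a\in V$ and a nilpotent $x$ with $x^k=0$. Writing $L_y,R_y$ for left and right multiplication by $y$, these operators commute and are nilpotent, and conjugation by an invertible $v\in\widetilde{A}$ is $\operatorname{Ad}_v=L_vR_{v^{-1}}$. I would set
\[
y:=-\log(1-x)=\sum_{n=1}^{k-1}\tfrac{1}{n}x^n,
\]
a finite sum since $x^k=0$, so that $e^y=(1-x)^{-1}$ and the inner automorphism induced by $x$ is exactly $\operatorname{Ad}_{e^y}$. Because $y$ is an $\mathbb{F}$-linear combination of the nilpotent elements $x^n$, we have $y\in N(A)^+$, and hence $\operatorname{ad}_y(V)=[y,V]\subseteq V$. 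Using that $L_y$ and $R_y$ commute and that $\operatorname{ad}_y=L_y-R_y$, I would then compute
\[
\operatorname{Ad}_{e^y}=L_{e^y}R_{e^{-y}}=e^{L_y}e^{-R_y}=e^{L_y-R_y}=e^{\operatorname{ad}_y}=\sum_{m\ge 0}\tfrac{1}{m!}\operatorname{ad}_y^{\,m},
\]
a finite sum since $\operatorname{ad}_y$ is nilpotent. As each $\operatorname{ad}_y^{\,m}$ preserves $V$, so does $\operatorname{Ad}_{e^y}$; that is, $V$ is $N(A)$-invariant.

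The main obstacle is part~(2): making precise the passage from $v=(1-x)^{-1}$ to its nilpotent logarithm $y=-\log(1-x)\in N(A)^+$, and justifying the operator identity $\operatorname{Ad}_{e^y}=e^{\operatorname{ad}_y}$ as an equality of \emph{finite} sums of operators on $A$. Both the nilpotent logarithm (dividing by each $n$) and the exponential of $\operatorname{ad}_y$ (dividing by each $m!$) require inverting arbitrary positive integers, which is precisely why $\charact(\mathbb{F})=0$ is assumed in part~(2), in contrast to the milder $\charact(\mathbb{F})\neq 2$ that sufficed in the square-zero case of \autoref{prp:InvSqZero}.
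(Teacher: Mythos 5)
Your proposal is correct. Part~(1) is essentially the paper's own proof: the paper also scales the nilpotent element by field scalars, observes that $(1+x+\cdots+x^{k-1})a(1-x)-a$ gives expressions of the form $\sum_j \lambda^j x^j[x,a]$ lying in $V$, and inverts a Vandermonde matrix built from distinct scalars to isolate $[x,a]$; your formulation as ``extract the linear coefficient of the polynomial $t\mapsto \alpha_{tx}(a)-a$'' is the same argument, if anything stated more cleanly. Part~(2), however, is a genuinely different route. The paper proves invariance by a purely bracket-theoretic computation: it first derives $[a,x]x\in V$ and $[a,x]x^2\in V$ from identities such as $[x,[x,a]]=x^2a-2xax+ax^2$ and $[x,[x,[x,a]]]=[x^3,a]-3x[x,a]x$, and then runs an induction (substituting $x+x^n$ for $x$) to get $[a,x]x^{n+1}\in V$, dividing along the way by $2$, $3$ and $n+2$ --- which is where $\charact(\mathbb{F})=0$ enters. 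You instead pass to the nilpotent logarithm $y=-\log(1-x)\in N(A)^+$, so that the inner automorphism in question is $\operatorname{Ad}_{e^y}$, and invoke the operator identity $\operatorname{Ad}_{e^y}=e^{L_y}e^{-R_y}=e^{L_y-R_y}=e^{\operatorname{ad}_y}$, a finite sum of powers of $\operatorname{ad}_y$, each of which preserves $V$ by hypothesis. Your argument is shorter and more conceptual (it is the standard Lie-theoretic exponential--adjoint dictionary, legitimate here because all series truncate and $\mathbb{Q}\subseteq\mathbb{F}$), whereas the paper's is elementary and self-contained, using nothing beyond commutator identities; both hinge on inverting arbitrary positive integers, exactly as you note in your discussion of where characteristic zero is used. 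The formal steps you flag as needing care ($\exp(-\log(1-x))=(1-x)^{-1}$ for nilpotent $x$, and $e^{A}e^{B}=e^{A+B}$ for commuting nilpotent operators) are standard polynomial identities over $\mathbb{Q}$ and pose no real gap.
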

\begin{proof}
(1)
Let $a \in V$ and $k \geq 2$.
We need to verify that the commutator $[x,a]$ belongs to $V$ whenever $x\in A$ satisfies $x^k=0$.
Given such an element $x$, we have $(1+x+x^2+\ldots+x^{k-1})V(1-x) \subseteq V$, and thus
\[
[x,a] + x[x,a] + \ldots + x^{k-1}[x,a]
= (1+x+x^2+\ldots+x^{k-1})a(1-x) - a \in V.
\]
Given a nonzero $\lambda \in \mathbb{F}$, using that $(\lambda x)^k=0$, we get
\begin{align*}
&[x,a] + \lambda x[x,a] + \ldots + \lambda^{k-1} x^{k-1}[x,a] \\
&\qquad= \lambda^{-1}\big( [\lambda x,a] + \lambda x[\lambda x,a] + \ldots + (\lambda x)^{k-1}[\lambda x,a] \big)
\in V.
\end{align*}

Using that $\mathbb{F}$ is infinite, we can choose elements $\lambda_1,\ldots,\lambda_{k-1} \in F$ that are pairwise distinct and nonzero.
Then the corresponding Vandermonde matrix
\[
\begin{pmatrix}
1 & \lambda_1 & \lambda_1^2 & \cdots & \lambda_1^{k-1} \\
1 & \lambda_2 & \lambda_2^2 & \cdots & \lambda_2^{k-1} \\
\vdots & & & & \vdots \\
1 & \lambda_{k-1} & \lambda_{k-1}^2 & \cdots & \lambda_{k-1}^{k-1} \\
\end{pmatrix}
\]
is invertible and using the entries in the first column of the inverse matrix, we obtain $\mu_1, \ldots, \mu_{k-1} \in \mathbb{F}$ such that
\[
[x,a]
= \sum_{j=1}^{k-1} \mu_j \big( [x,a] + \lambda_j x[x,a] + \ldots + \lambda_j^{k-1} x^{k-1}[x,a] \big) \in V.
\]
This finishes the proof.

(2)
Let $x\in A$ satisfy $x^{k+1}=0$ and let $a\in V$. 
Set $b=(1-x)a(1+x+\cdots+x^k)$. 
Then
\begin{align*}
  b&=(1-x)a(1+x+\cdots+x^k)\\
   &=a-[x, a]-xa(x+\cdots+x^k)+a(x^2+\cdots+x^k) \\
  &= a-[x, a]-\sum_{i=1}^{k}[x, a]x^j.
\end{align*}
We aim to prove that $b\in V$. For this, it suffices to show that $[x, a]x^j\in V$ for all $j=1,\ldots,k$.
Since
$[x, [x, a]]$ belongs to $V$ as $[N(A),V]\subseteq V$, we deduce that also $x^2a-2xax+ax^2$ belongs to  $V$. Also, $x^2a-ax^2\in V$.
We get
\begin{align}\label{eq:3}\tag{5.1}
ax^2-xax=[a, x]x\in V.
\end{align}
Note that
\[
[x, [x, [x, a]]]=x^3a-3x^2ax+3xax^2-ax^3=[x^3, a]-3x[x, a]x
\]
belongs to $V$ as $[x, [x, a]]\in V$ and $[N(A),V]\subseteq V$, and thus $x[x, a]x\in V$. Note that 
\[
[x, [a, x]x]\stackrel{\eqref{eq:3}}{=}x[a, x]x-[a, x]x^2
\]
belongs to $V$, and thus 
\begin{align}
\label{eq:4}\tag{5.2}
[a, x]x^2\in V.
\end{align}

Let $x\in N(A)$.
Given $n\geq 2$, we assume that $[a, x]x^j\in V$ for all $j=1,\ldots,n$. We claim that $[a, x]x^{n+1}\in V$.
Since $x+x^n$ is also nilpotent, replacing $x$ by $ x+x^n$ in  \eqref{eq:4} gives $[a, x+x^n](x+x^n)^2\in V$. Using this, 
we get
$$
[a, x+x^n](x+x^n)^2=[a, x+x^n](x^2+2x^{n+1}+x^{2n})\in V,
$$
which implies that
$$
[a, x](2x^{n+1}+x^{2n})+[a, x^n](x^2+2x^{n+1})\in V.
$$
Hence
\begin{align}\label{eq:5}\tag{5.3}
2[a, x]x^{n+1}+[a, x^n]x^2\in V.
\end{align}
We compute
\begin{align*}
[a, x^n]x^2&=\sum_{j=0}^{n-1}x^j[a, x]x^{n-j-1}x^2\\
&=\sum_{j=1}^{n-1}x^j[a, x]x^{n-j+1}+[a, x]x^{n+1}\\
&=\sum_{j=1}^{n-1}\Big[x^j, [a, x]x^{n-j+1}\Big]+(n-1)[a, x]x^{n+1}+[a, x]x^{n+1}\\
&=\sum_{j=1}^{n-1}\Big[x^j, [a, x]x^{n-j+1}\Big]+n[a, x]x^{n+1}
\end{align*}
Using \eqref{eq:5} at the last step, we get
\[
2[a, x]x^{n+1}+[a, x^n]x^2=\sum_{j=1}^{n-1}\Big[x^j, [a, x]x^{n-j+1}\Big]+(n+2)[a, x]x^{n+1}\in V.\]
Since ${n-j+1}\leq n$ for $j=1,\ldots, n-1$, we get $\Big[x^j, [a, x]x^{n-j+1}\Big]\in V$ and so
$$
[a, x]x^{n+1}\in V,
$$
as desired.
\end{proof}

%==========================================================================================
\begin{prp}
\label{prp:CommutatorNilpotents}
Let $A$ be an algebra over an infinite field.
Then, given nilpotent elements $x,y \in A$, the commutator $[x,y]$ is a finite sum of nilpotent elements.
\end{prp}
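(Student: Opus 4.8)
The plan is to apply \autoref{prp:InvNilpot}(1) to the subspace $V := N(A)^+$ generated by the nilpotent elements of $A$. The crucial point is that this particular subspace is automatically $N(A)$-invariant. Indeed, the inner automorphism induced by a nilpotent $x$ with $x^k = 0$ is conjugation by the invertible element $1 + x + \cdots + x^{k-1} = (1-x)^{-1}$, and conjugation by any invertible $v \in \widetilde{A}$ preserves nilpotency, since $(vnv^{-1})^m = v n^m v^{-1}$ for all $n \in A$ and $m \geq 1$. Hence each such automorphism maps $N(A)$ into itself, and being $\mathbb{F}$-linear it therefore maps the span $N(A)^+$ into itself as well.

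With this in hand, I would invoke \autoref{prp:InvNilpot}(1): since the field is infinite and $V = N(A)^+$ is $N(A)$-invariant, we obtain $[N(A)^+, N(A)^+] \subseteq N(A)^+$. In particular, given nilpotent elements $x,y \in A$, both belong to $N(A)^+$, and so $[x,y] \in N(A)^+$. It then remains to observe that every element of $N(A)^+$ is a finite sum of nilpotent elements: by definition such an element has the form $\sum_i \lambda_i n_i$ with $\lambda_i \in \mathbb{F}$ and each $n_i$ nilpotent, and since $\lambda_i n_i$ is again nilpotent, this exhibits $[x,y]$ as a finite sum of nilpotents, as claimed.

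There is essentially no obstacle beyond what has already been established, as the entire computational weight of this result sits in \autoref{prp:InvNilpot}(1); the present statement is a short deduction from it. The only steps requiring any care are the verification that conjugation preserves nilpotency (so that $N(A)^+$ is genuinely $N(A)$-invariant) and the elementary remark that scalar multiples of nilpotent elements remain nilpotent, which is what turns membership in the span $N(A)^+$ into an honest finite sum of nilpotent elements.
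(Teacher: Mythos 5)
Your proposal is correct and is essentially identical to the paper's own proof: both apply \autoref{prp:InvNilpot}(1) to the subspace $V = N(A)^+$, using that this subspace is invariant under all (inner) automorphisms. You merely spell out two details the paper leaves implicit—that conjugation by an invertible element preserves nilpotency, and that elements of the span $N(A)^+$ are honest finite sums of nilpotent elements—both of which are verified correctly.
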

\begin{proof}
This follows from part~(1) of~\autoref{prp:InvNilpot} applied to $V=N(A)^+$, since the set consisting of finite sums of nilpotent elements is a subspace that is invariant under all (inner) automorphisms.
\end{proof}

%==========================================================================================
The next result is a variation of \autoref{prp:TFAE-FNSubspSqZero}.
By strengthening the assumption on the field, we can relax the assumption that every commutator is a sum of square-zero elements to allow nilpotent elements instead.

%==========================================================================================
\begin{thm}
\label{prp:TFAE-FNSubspNilpot}
Let $A$ be an algebra over an infinite field such that every commutator in $A$ is a sum of nilpotent elements, and such that every proper ideal is contained in a non-exceptional prime ideal.
Let $V \subseteq A$ be a fully noncentral subspace.
Then the following are equivalent:
\begin{enumerate}
\item
The subspace $V$ is invariant under all inner automorphisms of~$A$.
\item
The subspace $V$ is invariant under all inner automorphisms induced by nilpotent elements of~$A$.
\item
We have $[A,A] \subseteq V$.
\item
The subspace $V$ is a Lie ideal.
\item
The subspace $V$ is an $[A,A]$-submodule.
\end{enumerate}
\end{thm}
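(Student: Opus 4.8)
The plan is to mirror the proof of \autoref{prp:TFAE-FNSubspSqZero} almost verbatim, since the substantive new input---passing from invariance under square-zero unipotents to invariance under arbitrary nilpotent unipotents---has already been isolated in \autoref{prp:InvNilpot}. First I would record the cheap implications: (1) trivially implies (2), because invariance under all inner automorphisms in particular gives invariance under those induced by nilpotent elements. Next, I would invoke \autoref{prp:TFAE-FNSubgp} to conclude that (3), (4) and (5) are equivalent for the fully noncentral subgroup $V$; this is where the hypothesis that every proper ideal sits inside a non-exceptional prime ideal does its work.

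The only genuinely new link is `(2)$\Rightarrow$(5)'. Here I would apply part~(1) of \autoref{prp:InvNilpot}: since the base field is infinite and $V$ is invariant under the inner automorphisms induced by nilpotent elements (that is, $V$ is $N(A)$-invariant), we obtain $[N(A)^+, V] \subseteq V$. The assumption that every commutator in $A$ is a sum of nilpotent elements gives $[A,A] \subseteq N(A)^+$, and therefore $[[A,A],V] \subseteq [N(A)^+,V] \subseteq V$, which is exactly the assertion that $V$ is an $[A,A]$-submodule.

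To close the cycle I would prove `(3)$\Rightarrow$(1)' by the same one-line computation as in the square-zero case: for an invertible $u \in \widetilde{A}$ and $x \in V$, writing $uxu^{-1} = [u, xu^{-1}] + x$ and using $[\widetilde{A},\widetilde{A}] = [A,A] \subseteq V$ shows $uxu^{-1} \in V$, hence $uVu^{-1} \subseteq V$.

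I do not expect a real obstacle in this assembly; the difficulty has already been absorbed into \autoref{prp:InvNilpot}, whose proof is the delicate part (a Vandermonde interpolation over an infinite field, together with an induction producing $[a,x]x^{j} \in V$ for all $j$). The one point worth flagging is that the infinite-field hypothesis, rather than merely $\mathbb{F} \neq \{0,1\}$, is indispensable here: it is needed precisely to run the Vandermonde argument in \autoref{prp:InvNilpot}(1). This is exactly the trade-off that distinguishes this theorem from \autoref{prp:TFAE-FNSubspSqZero}, namely relaxing ``sum of square-zero elements'' to ``sum of nilpotent elements'' at the cost of strengthening the assumption on the field.
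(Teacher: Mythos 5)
Your proposal is correct and is essentially identical to the paper's own proof: the paper likewise reduces everything to \autoref{prp:TFAE-FNSubspSqZero}'s argument, with the single change that `(2)$\Rightarrow$(5)' now uses $[A,A]\subseteq N(A)^+$ together with $[N(A)^+,V]\subseteq V$ from \autoref{prp:InvNilpot}. Your closing of the cycle via \autoref{prp:TFAE-FNSubgp} for (3)--(5) and the one-line computation for `(3)$\Rightarrow$(1)' matches the paper exactly.
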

\begin{proof}
The result is proved analogous to \autoref{prp:TFAE-FNSubspSqZero}, with the only difference that for the implication `(2)$\Rightarrow$(5)' we use that $[A,A] \subseteq N(A)^+$ by assumption, that $[N(A)^+,V] \subseteq V$ by \autoref{prp:InvNilpot}, and consequently $[[A,A],V] \subseteq V$.
\end{proof}

%==========================================================================================
The next result can also be deduced from \cite[Theorem~2]{Chu99InvSubgpSimpleRg}.

%==========================================================================================
\begin{exa}
Let $R$ be a noncommutative, simple ring with infinite center and such that every commutator is a sum of nilpotent elements.
Assume that $R$ is not an exceptional prime ring (that is, $R$ does not embed into $M_2(\mathbb{F})$ for a field $\mathbb{F}$ of characteristic $2$).
Let $V \subseteq R$ be an additive subgroup that is invariant under all inner automorphisms of~$R$.
Then either $V \subseteq Z(R)$ or $[R,R] \subseteq V$.
In either case, $V$ is a Lie ideal.
Indeed, if $V \nsubseteq Z(R)$, then $V$ is fully noncentral because $R$ is simple, and the result follows from \autoref{prp:TFAE-FNSubspNilpot}.
\end{exa}

%==========================================================================================
An algebra $A$ over a field $\mathbb{F}$ is said to be \emph{zero-product balanced} if for all $x,y,z \in A$, the element $xy\otimes z - x\otimes yz \in A\otimes_{\mathbb{F}} A$ belongs to the subspace of $A \otimes_{\mathbb{F}} A$ generated by $\{v\otimes w : vw=0\}$;
see \cite[Definition~2.6]{GarThi23ZeroProdBalanced}.
This is closely related to the concept of a \emph{zero-product determined} algebra \cite{Bre21BookZeroProdDetermined}, \cite[Section~2]{GarThi23ZeroProdBalanced}.
In particular, a unital algebra is zero-product balanced if and only if it is zero-product determined.

%==========================================================================================
\begin{cor}
\label{prp:Balanced}
Let $A$ be a zero-product balanced, unital algebra over a field of characteristic $\neq 2$.
For a fully noncentral subspace $V \subseteq A$, the following
are equivalent:
\begin{enumerate}
\item 
The subspace $V$ is a Lie ideal.
\item 
The subspace $V$ is invariant under all inner automorphisms of~$A$.
\item 
We have $[A,A] \subseteq V$.
\end{enumerate}
\end{cor}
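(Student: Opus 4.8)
The plan is to reduce the statement to \autoref{prp:TFAE-FNSubspSqZero} by checking that a zero-product balanced, unital algebra $A$ over a field of characteristic $\neq 2$ satisfies all three hypotheses of that theorem: that the ground field is $\neq\{0,1\}$, that every commutator in $A$ is a sum of square-zero elements, and that every proper ideal of $A$ is contained in a non-exceptional prime ideal. Once these are in place, the listed equivalences are immediate, since the present conditions (1), (2), (3) are precisely conditions (4), (1), (3) of the theorem.

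The first hypothesis is the easiest: a field of characteristic $\neq 2$ cannot be the two-element field $\{0,1\}$ (it contains a copy of $\mathbb{F}_p$ for an odd prime $p$, or is infinite in characteristic $0$), so $\mathbb{F} \neq \{0,1\}$ automatically. For the second hypothesis I would simply invoke the structural property of zero-product balanced algebras recorded in \cite[Theorem~5.3]{GarThi23ZeroProdBalanced}, which states exactly that in such an algebra every commutator is a sum of square-zero elements.

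The third hypothesis is where I would combine two separate observations. Since $A$ is unital, every proper ideal is contained in a maximal ideal, and $A$ is idempotent; hence by \autoref{prp:MaxVsPrime} every maximal ideal of $A$ is prime, so every proper ideal of $A$ is contained in a prime ideal. On the other hand, for an algebra over a field of characteristic $\neq 2$ every prime quotient again has characteristic $\neq 2$ and is therefore non-exceptional in the sense of \autoref{dfn:Exceptional}. Putting these together shows that every proper ideal of $A$ is contained in a non-exceptional prime ideal.

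With all hypotheses confirmed, I would apply \autoref{prp:TFAE-FNSubspSqZero} to the fully noncentral subspace $V$, and read off the desired equivalences from the equivalence of conditions (1)--(5) there. I do not anticipate any genuine obstacle: the only step that requires attention is the passage to non-exceptional prime ideals, and this follows directly from unitality (through idempotence and \autoref{prp:MaxVsPrime}) together with the characteristic assumption. Everything else is a matter of matching the labels of the equivalent conditions.
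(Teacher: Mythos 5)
Your proposal is correct and takes essentially the same route as the paper: both verify the three hypotheses of \autoref{prp:TFAE-FNSubspSqZero} in the same way --- commutators are sums of square-zero elements (you cite \cite[Theorem~5.3]{GarThi23ZeroProdBalanced}, the paper primarily cites \cite[Theorem~9.1]{Bre21BookZeroProdDetermined} with the same reference as an alternative), unitality plus \autoref{prp:MaxVsPrime} gives containment of proper ideals in prime ideals, and characteristic $\neq 2$ rules out exceptional prime quotients --- and then read off the equivalences. No gaps.
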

\begin{proof}
Since $A$ is unital and zero-product balanced (hence zero-product determined), every commutator in $A$ is a sum of square-zero elements by \cite[Theorem~9.1]{Bre21BookZeroProdDetermined};
see also \cite[Theorem~5.3]{GarThi23ZeroProdBalanced}.
Further, since $A$ is unital, every proper ideal is contained in a maximal ideal, and maximal ideals in unital algebras are prime;
see \autoref{prp:MaxVsPrime}.
Finally, since $A$ is an algebra over a field of characteristic $\neq 2$, every prime ideal in $A$ is non-exceptional.
Hence, the result follows from \autoref{prp:TFAE-FNSubspSqZero}.
\end{proof}

%==========================================================================================
\begin{exa}
Let $A$ be a unital algebra over a field of characteristic $\neq 2$, let $n \geq 2$, and let $V \subseteq M_n(A)$ be a fully noncentral subspace.
Then $V$ is a Lie ideal if and only if it is invariant under all inner automorphisms of~$M_n(A)$, and if and only if $[M_n(A),M_n(A)] \subseteq V$.
Indeed, by \cite[Theorem~3.8]{GarThi23ZeroProdBalanced}, $M_n(A)$ is zero-product balanced (see also \cite[Corollary~2.4]{Bre21BookZeroProdDetermined}), whence the result follows from \autoref{prp:Balanced}.
\end{exa}

%==========================================================================================
%==========================================================================================
\section{Commutators as sums of square-zero elements}
\label{sec:SumSqZero}

%==========================================================================================
Many interesting rings have the property that every commutator is a sum of square-zero elements.
In this section, we show that if this is the case, then under mild additional assumptions, the commutator subgroup admits a precise description as the additive subgroup generated by a special class of square-zero elements;
see \autoref{prp:Commutator-FN2}.
It remains open if the same holds under the weaker assumption that every commutator is a sum of nilpotent elements;
see \autoref{qst:Nilpotent}.

\medskip

%==========================================================================================
Given a ring $R$, recall that we set $N_2(R) := \{ x \in R : x^2=0 \}$.
We say that $x \in R$ is an \emph{orthogonally factorizable square-zero element} if there exist $y,z \in R$ such that $x=yz$ and $zy=0$.
We denote the set of such elements by $FN_2(R)$;
see \cite[Definition~5.1]{GarThi23ZeroProdBalanced}.
We have $FN_2(R) \subseteq [R,R]$, since if $x=yz$ and $zy=0$, then $x=[y,z]$.

Given a subset $B$ of $R$, we use $B^+$ to denote the additive subgroup of $R$ generated by $B$.
Recall that a subset in a ring is \emph{full} if it is not contained in any proper ideal.

%==========================================================================================
\begin{lma}
\label{prp:N2-full}
Let $R$ be a ring such that every proper ideal is contained in a prime ideal.
Then the following are equivalent:
\begin{enumerate}
%\item
%The set of nilpotent elements is full.
\item
The set $N_2(R)$ is full.
\item
The set $FN_2(R)$ is full.
\item
The set $FN_2(R)$ is fully noncentral.
\end{enumerate}
\end{lma}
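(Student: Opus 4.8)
The plan is to prove the three equivalences by establishing the cycle $(1) \Rightarrow (2) \Rightarrow (3) \Rightarrow (1)$, exploiting the inclusion $FN_2(R) \subseteq N_2(R)$ and the hypothesis that proper ideals sit inside prime ideals. First I would dispose of the implication $(3) \Rightarrow (2)$, which is immediate: if $FN_2(R)$ is fully noncentral, then $[R, FN_2(R)]$ is full, and since $[R, FN_2(R)] \subseteq R \cdot FN_2(R) + FN_2(R) \cdot R$ lies in the ideal generated by $FN_2(R)$, that ideal must be full, so $FN_2(R)$ is full. The implication $(2) \Rightarrow (1)$ is also essentially formal, since $FN_2(R) \subseteq N_2(R)$ forces the ideal generated by $N_2(R)$ to contain the (full) ideal generated by $FN_2(R)$, hence $N_2(R)$ is full.

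The crux will be the implication $(1) \Rightarrow (3)$, where the primeness hypothesis is genuinely needed. Here I would argue by contradiction using a prime quotient. Suppose $FN_2(R)$ is not fully noncentral, meaning $[R, FN_2(R)]$ is contained in a proper ideal; by hypothesis this ideal is contained in a prime ideal $P$. Passing to $R/P$, the image of $FN_2(R)$ is central in the prime ring $R/P$. The key observation is that orthogonally factorizable square-zero elements pass to orthogonally factorizable square-zero elements in any quotient, and in a prime ring a central square-zero element must be zero (if $c$ is central with $c^2 = 0$, then $cRc = Rc^2 = \{0\}$, so primeness forces $c = 0$). Hence every element of $FN_2(R/P)$ vanishes, i.e.\ $FN_2(R) \subseteq P$. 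The remaining task is to upgrade this to $N_2(R) \subseteq P$, contradicting fullness of $N_2(R)$.

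The main obstacle is therefore showing that $N_2(R/P) = \{0\}$, or equivalently that a nonzero square-zero element in a prime ring generates a nonzero orthogonally factorizable square-zero element. The natural approach is: given $x \in R/P$ with $x^2 = 0$ and $x \neq 0$, consider elements of the form $xax$ for $a \in R/P$; primeness guarantees $x (R/P) x \neq \{0\}$, so pick $a$ with $xax \neq 0$. Setting $y = xa$ and $z = x$, one checks $zy = x \cdot xa = x^2 a = 0$, so $yz = xax \in FN_2(R/P)$ is a nonzero orthogonally factorizable square-zero element, contradicting $FN_2(R/P) = \{0\}$. This shows the image of $N_2(R)$ in $R/P$ is zero, so $N_2(R) \subseteq P$, giving the desired contradiction and completing the cycle.

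Throughout, I would make repeated use of the standard fact that the ideal generated by a subset $X$ is $X + RX + XR + RXR = \widetilde{R}X\widetilde{R}$ (as recorded before \autoref{prp:LieIdealInSubgroup}), so that fullness of any of $N_2(R)$, $FN_2(R)$, or $[R, FN_2(R)]$ can be tested at the level of prime quotients. The only place where I expect to need care is in the factorization argument for square-zero elements in the prime quotient, ensuring that $zy = 0$ holds exactly and that the resulting element $yz$ is both square-zero and central, so that the two structural constraints (centrality coming from the failure of full noncentrality, square-zero coming from the element itself) combine to force vanishing.
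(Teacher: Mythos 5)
Your overall strategy coincides with the paper's: the same two easy implications, and for (1)$\Rightarrow$(3) the same key ingredients (a prime ideal $P$ containing the ideal generated by $[R,FN_2(R)]$, the vanishing of central square-zero elements in a prime ring, and the elements $xax$). However, there is a genuine gap in your execution of (1)$\Rightarrow$(3). From $[R,FN_2(R)]\subseteq P$ you correctly conclude that the \emph{image} of $FN_2(R)$ in $R/P$ consists of central square-zero elements and hence vanishes, i.e.\ $FN_2(R)\subseteq P$. But you then assert that ``every element of $FN_2(R/P)$ vanishes,'' which is strictly stronger: $FN_2(R/P)$ is defined intrinsically in the quotient, and an orthogonal factorization $\bar w=\bar y\bar z$ with $\bar z\bar y=0$ in $R/P$ need not lift to one in $R$ (lifts only satisfy $zy\in P$), so $FN_2(R/P)$ can strictly contain the image of $FN_2(R)$. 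Concretely, take $R=\mathbb{Z}\langle a,b\rangle$ (a domain, so $N_2(R)=FN_2(R)=\{0\}$) and let $P$ be the kernel of a surjection $R\to M_2(\mathbb{F}_2)$; then $[R,FN_2(R)]=\{0\}\subseteq P$, yet $e_{12}=e_{12}e_{22}$ with $e_{22}e_{12}=0$ is a nonzero element of $FN_2(R/P)$. This example satisfies every premise you actually invoke for that inference, so the inference is invalid. Your subsequent contradiction (``$xax\in FN_2(R/P)$ is nonzero, contradicting $FN_2(R/P)=\{0\}$'') rests on this unproved claim; relatedly, a square-zero element of $R/P$ need not lift to a square-zero element of $R$, so the statement $N_2(R/P)=\{0\}$ is both unproved and more than you need.

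The repair is to quantify over square-zero elements of $R$ rather than of $R/P$, which is exactly what the paper does. Given $x\in N_2(R)$ and $a\in R$, the element $xax$ lies in $FN_2(R)$ genuinely in $R$ (take $y=xa$, $z=x$; then $zy=x^2a=0$), so $[R,xax]\subseteq P$ and the image of $xax$ in $R/P$ is a central square-zero element of a prime ring, hence zero. Thus $xRx\subseteq P$, and primeness of $P$ yields $x\in P$. Therefore $N_2(R)\subseteq P$, contradicting the fullness of $N_2(R)$ in~(1). Note that with this change you never need $FN_2(R/P)$ or $N_2(R/P)$ at all: only images of elements from $R$ matter, and for those your own computations work verbatim.
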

\begin{proof}
Since $[R,FN_2(R))]$ is contained in the ideal generated by $FN_2(R)$, we see that~(3) implies~(2).
It is clear that~(2) implies~(1). %, which in turn implies~(1).

To show that~(1) implies~(3), let $I$ denote the ideal of $R$ generated by $[R,FN_2(R)]$.
Assuming that $I \neq R$, choose a prime ideal $P \subseteq R$ such that $I \subseteq P$.
We will show that every square-zero element is contained in $P$.

Let $x \in N_2(R)$.
Note that $xRx \subseteq FN_2(R)$, and thus $[R,xRx] \subseteq P$.
Thus, for each $a \in R$, the image of $xax$ in $R/P$ is a central square-zero element.
Since $R/P$ is a prime ring, its center is either zero or a domain.
Consequently, every central square-zero element in $R/P$ is zero, and we deduce that $xRx \subseteq P$, which implies $x \in P$, as desired.
\end{proof}

%==========================================================================================
\begin{thm}
\label{prp:Commutator-FN2}
Let $R$ be a ring such that every proper ideal is contained in a non-exceptional prime ideal.
%Assume that every commutator in $R$ is a sum of square-zero elements, and that the set of square-zero elements is full.
Assume that $[R,R] \subseteq N_2(R)^+$, and that $N_2(R)$ is full.
Then
\[
[R,R] = FN_2(R)^+.
\]
\end{thm}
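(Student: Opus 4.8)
The plan is to prove the nontrivial inclusion $[R,R] \subseteq FN_2(R)^+$, since the reverse inclusion $FN_2(R)^+ \subseteq [R,R]$ is immediate from $FN_2(R) \subseteq [R,R]$ and the fact that $[R,R]$ is an additive subgroup. To get the forward inclusion, I would set $V := FN_2(R)^+$ and show that $V$ is a fully noncentral $[R,R]$-submodule; the conclusion then follows from the implication ``(3)$\Rightarrow$(1)'' of \autoref{prp:TFAE-FNSubgp}. Full noncentrality of $V$ comes essentially for free: since $N_2(R)$ is full by hypothesis, \autoref{prp:N2-full} gives that $FN_2(R)$ is fully noncentral, i.e. $[R,FN_2(R)]$ is full; as $[R,FN_2(R)] \subseteq [R,V]$, the subgroup $V$ is fully noncentral as well.

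The heart of the argument is the submodule property $[[R,R],V] \subseteq V$, and this is exactly where the hypothesis $[R,R] \subseteq N_2(R)^+$ enters. First I would establish, for $x \in N_2(R)$ and $w \in FN_2(R)$, the identity
\[
[x,w] = \big( (1+x)w(1-x) - w \big) + xwx,
\]
which follows by expanding $(1+x)w(1-x) = w + [x,w] - xwx$. I would then argue that both summands on the right lie in $FN_2(R)^+$. For the conjugation term, writing $w = yz$ with $zy = 0$ and using $x^2 = 0$, one checks that $(1+x)w(1-x) = \big[(1+x)y\big]\big[z(1-x)\big]$ with $z(1-x)(1+x)y = z(1-x^2)y = zy = 0$, so $(1+x)w(1-x) \in FN_2(R)$ and hence its difference with $w$ lies in $FN_2(R)^+$. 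For the second term, $xwx = (xw)x$ with $x(xw) = x^2w = 0$, so $xwx \in FN_2(R)$ as well (this is the inclusion $xRx \subseteq FN_2(R)$ already used in the proof of \autoref{prp:N2-full}). Thus $[x,w] \in FN_2(R)^+$, and by bi-additivity of the bracket $[N_2(R)^+, V] \subseteq V$. Invoking the standing hypothesis $[R,R] \subseteq N_2(R)^+$, I would then deduce
\[
[[R,R], V] \subseteq [N_2(R)^+, V] \subseteq V,
\]
so that $V$ is an $[R,R]$-submodule. Since $V$ is also fully noncentral, \autoref{prp:TFAE-FNSubgp} yields $[R,R] \subseteq V = FN_2(R)^+$, finishing the proof.

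The hard part is the verification that conjugation by the unipotent $1+x$ preserves orthogonal factorizability, i.e. the computation of the term $(1+x)w(1-x)$; once this is in hand, everything else is routine bookkeeping with bi-additivity. The conceptual point worth emphasizing is that the hypothesis $[R,R] \subseteq N_2(R)^+$ is precisely what upgrades the easy inclusion $[N_2(R), FN_2(R)] \subseteq FN_2(R)^+$ to the $[R,R]$-submodule property required to apply \autoref{prp:TFAE-FNSubgp}.
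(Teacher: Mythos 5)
Your proposal is correct and follows essentially the same route as the paper: set $V := FN_2(R)^+$, get full noncentrality from \autoref{prp:N2-full}, prove $[N_2(R)^+,V] \subseteq V$ via the conjugation identity for square-zero $x$, and conclude with \autoref{prp:TFAE-FNSubgp}. The only (harmless) difference is that where the paper simply invokes invariance of $V$ under all automorphisms of $R$ to see $(1+x)w(1-x) \in V$, you verify this explicitly by checking that conjugation by $1+x$ preserves orthogonal factorizability, which makes that step self-contained.
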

\begin{proof}
Set $V := FN_2(R)^+$.
The inclusion $V \subseteq [R,R]$ holds in general.
We will apply \autoref{prp:TFAE-FNSubgp} to obtain the converse inclusion $[R,R] \subseteq V$.

Since $N_2(R)$ is full, it follows from \autoref{prp:N2-full} that $V$ is fully noncentral.
Next, we show that $[N_2(R)^+,V] \subseteq V$.
Let $x \in N_2(R)$ and $w \in V$.
Since $V$ is invariant under all automorphisms of $R$, we have $(1+x)w(1-x) \in V$.
Further, we have $w \in V$ and $xwx \in FN_2(R) \subseteq V$, and therefore
\[
[x,w] 
= (1+x)w(1-x) - w - xwx 
\in V.
\]

By assumption, we have $[R,R] \subseteq N_2(R)^+$, and it follows that $V$ is an $[R,R]$-submodule.
Applying \autoref{prp:TFAE-FNSubgp} we get $[R,R] \subseteq V$.
\end{proof}

%==========================================================================================
If $R = M_2(\mathbb{F})$ for a field $\mathbb{F}$ of characteristic $2$, then $[R,R]$ agrees with the additive subgroup generated by $FN_2(R)$.
This suggests that the answer to the following question might be positive.

%==========================================================================================
\begin{qst}
Does \autoref{prp:Commutator-FN2} also hold for rings where every proper ideal is contained in a (possibly exceptional) prime ideal?
\end{qst}

%==========================================================================================
\begin{cor}
\label{prp:N2-FN2}
Let $R$ be a ring such that every proper ideal is contained in a non-exceptional prime ideal, and assume that $[R,R]$ is full.
Then the following are equivalent:
\begin{enumerate}
\item
We have $[R,R] \subseteq N_2(R)^+$.
\item
We have $[R,R] = FN_2(R)^+$.
%The commutator subspace $[A,A]$ is the subspace generated by $FN_2(A)$.
\end{enumerate}
\end{cor}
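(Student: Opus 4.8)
The plan is to derive both implications from the results already established, with the substantive content being absorbed entirely into \autoref{prp:Commutator-FN2}.

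For the implication from~(2) to~(1), I would only need the elementary observation that every orthogonally factorizable square-zero element is a genuine square-zero element: if $x = yz$ with $zy = 0$, then $x^2 = y(zy)z = 0$, so $FN_2(R) \subseteq N_2(R)$ and hence $FN_2(R)^+ \subseteq N_2(R)^+$. Assuming $[R,R] = FN_2(R)^+$, this inclusion immediately gives $[R,R] \subseteq N_2(R)^+$, which is~(1).

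For the converse, suppose $[R,R] \subseteq N_2(R)^+$. The first step is to extract fullness of $N_2(R)$ from the standing hypothesis that $[R,R]$ is full. Indeed, the ideal generated by $N_2(R)$ contains $N_2(R)^+$, hence contains $[R,R]$; since $[R,R]$ is not contained in any proper ideal, this ideal must be all of $R$, so $N_2(R)$ is full. Once this is in place, all hypotheses of \autoref{prp:Commutator-FN2} are satisfied — every proper ideal lies in a non-exceptional prime ideal, $[R,R] \subseteq N_2(R)^+$, and $N_2(R)$ is full — and that theorem yields $[R,R] = FN_2(R)^+$, which is~(2).

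I do not expect any genuine obstacle here: the corollary is a packaging of \autoref{prp:Commutator-FN2}, where the only new input is the short argument that the fullness of $[R,R]$ forces the fullness of $N_2(R)$. The harder structural work — invoking \autoref{prp:N2-full} to pass between fullness of $N_2(R)$ and full noncentrality of $FN_2(R)^+$, and \autoref{prp:TFAE-FNSubgp} to conclude $[R,R] \subseteq FN_2(R)^+$ — has already been carried out inside \autoref{prp:Commutator-FN2}, so no further analysis of prime quotients is needed.
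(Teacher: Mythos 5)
Your proposal is correct and follows essentially the same route as the paper's own proof: the paper also dismisses (2)$\Rightarrow$(1) as clear (precisely because $FN_2(R) \subseteq N_2(R)$), deduces fullness of $N_2(R)$ from fullness of $[R,R]$, and then invokes \autoref{prp:Commutator-FN2}. Your write-up merely makes these short steps explicit, which is fine.
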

\begin{proof}
It is clear that~(2) implies~(1).
Conversely, assume that every commutator is a sum of square-zero elements.
Since $[R,R]$ is full, it follows that the set of square-zero elements is full, and now~(2) follows from \autoref{prp:Commutator-FN2}.
\end{proof}

%==========================================================================================
\begin{rmks}
(1)
If $R$ is a zero-product balanced, idempotent ring, then $[R,R] = FN_2(R)^+$ by \cite[Theorem~5.3]{GarThi23ZeroProdBalanced}.
This includes all rings generated by idempotents \cite[Example~3.7]{GarThi23ZeroProdBalanced}, in particular all simple rings that contain a nontrivial idempotent.
It also includes all unital \ca{s} that have no one-dimensional irreducible representations \cite{GarThi25pre:ZeroProdRingsCAlgs}.
The famous Jiang-Su algebra is a unital, simple \ca{} that contains no idempotents other than zero and one.
This algebra has no one-dimensional irreducible representations and is therefore zero-product balanced.

(2)
There exist simple rings where $FN_2(R)^+$ is a proper subset of $[R,R]$.
Indeed, by \cite[Theorem~10]{CheLeePuc10CommNilSimpleRings} there exists a simple ring~$R$ such that the nilpotent elements in~$R$ form a subring $W$ with $\{0\} \neq W \neq R$.
Then $R = [R,R]^2$ (see \autoref{prp:GenByCommutators}), but $R$ is not generated by $FN_2(R)$ as a ring.
\end{rmks}

%==========================================================================================
\begin{pbm}
Find rings $R$ such that $FN_2(R)^+$ is a proper subset of $N_2(R)^+$.
\end{pbm}

%==========================================================================================
We end the paper with a short discussion of the relationship between nilpotent elements, square-zero elements, and commutators.
In \ca{s} it is known that every nilpotent element is a sum of commutators (\cite[Lemma~2.1]{Rob16LieIdeals}), and for many \ca{s} it is known that every commutator is a sum of square-zero elements (\cite[Theorem~4.2]{Rob16LieIdeals}, \cite{GarThi25pre:ZeroProdRingsCAlgs}), although it remains open if this holds for every \ca{} (\cite[Question~2.5]{Rob16LieIdeals}, \cite[Question~4.1]{GarThi24PrimeIdealsCAlg}).
This raises the question if every nilpotent element in a \ca{} is a sum of square-zero elements (\cite[Question~4.5]{GarThi24PrimeIdealsCAlg}).

Of course, in general rings it is not true that nilpotent elements are sums of square-zero elements.
Nevertheless, it is conceivable that the answer to the following question may be positive.

%==========================================================================================
\begin{qst}
\label{qst:Nilpotent}
Let $R$ be a ring such that every proper ideal of $R$ is contained in a non-exceptional prime ideal, and such that $[R,R]$ is full.
Assume that every commutator in $R$ is a sum of nilpotent elements.
Does it follow that every commutator in $R$ is a sum of square-zero elements?
\end{qst}

%==========================================================================================
%==========================================================================================
%\bibliographystyle{../../aomalphaMyShort}
%\bibliography{../../References}

\providecommand{\bysame}{\leavevmode\hbox to3em{\hrulefill}\thinspace}
\providecommand{\noopsort}[1]{}
\providecommand{\mr}[1]{\href{http://www.ams.org/mathscinet-getitem?mr=#1}{MR~#1}}
\providecommand{\zbl}[1]{\href{http://www.zentralblatt-math.org/zmath/en/search/?q=an:#1}{Zbl~#1}}
\providecommand{\jfm}[1]{\href{http://www.emis.de/cgi-bin/JFM-item?#1}{JFM~#1}}
\providecommand{\arxiv}[1]{\href{http://www.arxiv.org/abs/#1}{arXiv~#1}}
\providecommand{\doi}[1]{\url{http://dx.doi.org/#1}}
\providecommand{\MR}{\relax\ifhmode\unskip\space\fi MR }
% \MRhref is called by the amsart/book/proc definition of \MR.
\providecommand{\MRhref}[2]{%
  \href{http://www.ams.org/mathscinet-getitem?mr=#1}{#2}
}
\providecommand{\href}[2]{#2}

\end{document}